\def\transpose{{\hbox{\tiny\it T}}}
\newcommand{\RL}{{\mathbb R}}
\newcommand{\IND}{{\mathbb I}}
\def\be{\begin{eqnarray}}
\def\ee{\end{eqnarray}}
\def\ben{\begin{eqnarray*}}
\def\een{\end{eqnarray*}}
\def\barxi{\oo\xi}
\def\ddt{\frac{d}{dt}}
\def\sq{$\Box$}
\def\qed{\ifmmode\sq\else{\unskip\nobreak\hfil
\penalty50\hskip1em\null\nobreak\hfil\sq
\parfillskip=0pt\finalhyphendemerits=0\endgraf}\fi\par\medbreak}
\newsavebox{\junk}
\savebox{\junk}[1.6mm]{\hbox{$|\!|\!|$}}
\def\lll{{\usebox{\junk}}}
\def\state{{\sf X}}
\def\ystate{{\sf Y}}
\newcommand{\field}[1]{\mathbb{#1}}
\def\Re{\field{R}}
\def\Co{\field{C}}
\def\ind{\field{I}}
\def\One{\hbox{\large\bf 1}}
\newcommand{\barR}{{\overline{R}}}
\def\bfmM{{\mbox{\protect\boldmath$M$}}}
\def\bfPhi{\mbox{\protect\boldmath$\Phi$}}
\def\bfPsi{\mbox{\protect\boldmath$\Psi$}}
\def\til={{\widetilde =}}
\def\clB{{\cal B}}
\def\clD{{\cal D}}
\def\clE{{\cal E}}
\def\clH{{\cal H}}
\def\clS{{\cal S}}
\def\half{{\mathchoice{\textstyle \frac{1}{2}}%
{\frac{1}{2}}%
{\hbox{\tiny $\frac{1}{2}$}}%
{\hbox{\tiny $\frac{1}{2}$}} }}
\def\eqdef{\mathbin{:=}}
\def\Prob{{\sf P}}
\def\Expect{{\sf E}}
\def\epsy{\varepsilon}
\def\varble{\,\cdot\,}
\newtheorem{theorem}{Theorem}[section]
\newtheorem{corollary}[theorem]{Corollary}
\newtheorem{proposition}[theorem]{Proposition}
\newtheorem{lemma}[theorem]{Lemma}
\def\Lemma#1{Lemma~\ref{t:#1}}
\def\Proposition#1{Proposition~\ref{t:#1}}
\def\Theorem#1{Theorem~\ref{t:#1}}
\def\bdes{\begin{description}}
\def\edes{\end{description}}
\newcommand{\oo}{\overline}
\def\trace{\mathrm{ trace\,}}
\def\Lv{L_\infty^v}
\def\FRAC#1#2#3{\genfrac{}{}{}{#1}{#2}{#3}}
\def\half{{\mathchoice{\FRAC{1}{1}{2}}%
{\FRAC{1}{1}{2}}%
{\FRAC{3}{1}{2}}%
{\FRAC{3}{1}{2}}}}
\def\fourth{{\mathchoice{\FRAC{1}{1}{4}}%
{\FRAC{1}{1}{4}}%
{\FRAC{3}{1}{4}}%
{\FRAC{3}{1}{4}}}}
\def\eqdist{\buildrel{\rm dist}\over =}
\def\bfmath#1{{\mathchoice{\mbox{\boldmath$#1$}}%
{\mbox{\boldmath$#1$}}%
{\mbox{\boldmath$\scriptstyle#1$}}%
{\mbox{\boldmath$\scriptscriptstyle#1$}}}}
\def\bfmB{\bfmath{B}}
\def\bfmI{\bfmath{I}}
\def\bfUpsilon{\bfmath{\Upsilon}}
\def\transpose{{\hbox{\tiny\it T}}}
\def\bfmI{{\mbox{\protect\boldmath$I$}}}
\def\One{\mbox{\rm{\large{1}}}}
\def\muLeb{\mathop{\mu^{\hbox{\tiny Leb}}}}
\newcounter{rmnum}
\newenvironment{romannum}{\begin{list}{{\upshape (\roman{rmnum})}}{\usecounter{rmnum}
\setlength{\leftmargin}{24pt}
\setlength{\rightmargin}{16pt}
\setlength{\itemindent}{-1pt}
}}{\end{list}}
\newcounter{anum}
\newenvironment{alphanum}{\begin{list}{{\upshape (\alph{anum})}}{\usecounter{anum}
\setlength{\leftmargin}{28pt}
\setlength{\rightmargin}{16pt}
\setlength{\itemindent}{0pt}
}}{\end{list}}
\newlength{\noteWidth}
\long\def\notes#1{\ifinner
             {\tiny #1}
             \else
             \marginpar{\parbox[t]{\noteWidth}{\raggedright\tiny #1}}
             \fi}
             \def\notes#1{}
\def\archive#1{}
\newcounter{tasks}
\newcounter{tasksA}
\begin{document}
 
\title{\vspace{-2cm}%
Approximating a Diffusion\\ by a Finite-State Hidden Markov Model}
\author{
      	I. Kontoyiannis\thanks{{\bf Corresponding author}. 
		Department of Informatics,
		Athens University of Economics and Business,
		Patission 76, Athens 10434, Greece.
                Email: {\tt yiannis@aueb.gr}.
		I.K.\ was supported 
	by the European Union and Greek National 
	Funds through the Operational Program Education 
	and Lifelong Learning of the National Strategic Reference 
	Framework through the Research Funding Program Thales-Investing 
	in Knowledge Society through the European Social Fund.  
	}
\and
        S.P. Meyn\thanks{Department of Electrical and Computer 
                Engineering,
                University of Florida, Gainesville, USA.
                 Email: {\tt meyn@ece.ufl.edu}.
                S.P.M.\ was supported in part by the National Science Foundation
  ECS-0523620,  and AFOSR grant FA9550-09-1-0190.
  Any opinions, findings,
  and conclusions or recommendations expressed in this material are
  those of the authors and do not necessarily reflect the views of the
  National Science Foundation or AFOSR.}
}

\maketitle
\thispagestyle{empty}
\setcounter{page}{0}
 
\begin{abstract}
For a wide class of continuous-time Markov processes
evolving on an open, connected subset of $\RL^d$, 
the following are shown to be equivalent:
\begin{itemize}
\item[(i)]
The process satisfies (a slightly weaker version of)
the classical Donsker-Varadhan conditions;
\item[(ii)]
The transition semigroup of the process can 
be approximated by a finite-state hidden Markov model,
in a strong sense in terms 
of an associated operator norm;
\item[(iii)]
The resolvent kernel of the process is
`$v$-separable', that is, it can be approximated
arbitrarily well in operator norm by finite-rank
kernels.
\end{itemize}
Under any (hence all) of the above conditions,
the Markov process is shown to have a purely
discrete spectrum on a naturally associated weighted
$L_\infty$ space.

\bigskip

\bigskip

{\small
\noindent
\textbf{Keywords:}  
Markov process, 
hidden Markov model, 
hypoelliptic diffusion,
stochastic Lyapunov function,
discrete spectrum}

\bigskip


\end{abstract}

\thispagestyle{empty}

\newpage
 
\section{Introduction} 

Consider a continuous-time Markov process 
$\bfPhi = \{\Phi(t) : t\geq 0\}$ taking
values in an open, connected subset $\state$ of 
$\Re^d$, equipped with its associated Borel 
$\sigma$-field $\clB$. We begin by assuming
that $\bfPhi$ is a diffusion; 
that is, it is the solution of the stochastic 
differential equation,
\be
d\Phi(t)=u(\Phi(t))dt +M(\Phi(t))dB(t),
\;\;\;\;t\geq 0,\;\Phi(0)=x,
\label{eq:SDE}
\ee
where $u=(u_1,u_2,\ldots,u_d)^\transpose:\state\to\RL^d$ and 
$M:\state\to\RL^d\times\RL^k$ are locally Lipschitz,
and $\bfmB=\{B(t) : t\geq 0\}$ is $k$-dimensional
standard Brownian motion. [Extensions 
to more general Markov processes are briefly
discussed in Section~\ref{s:extend}.]
Unless explicitly
stated otherwise, throughout the paper
we assume that:
$$
\left. 
\mbox{\parbox{.75\hsize}{\raggedright
The strong Markov process $\bfPhi$ is the unique
	strong solution of (\ref{eq:SDE})
	with continuous sample paths.}}
\right\}
\eqno{\hbox{\bf (A1)}}
$$
The distribution of the process $\bfPhi$ is 
described by the initial condition $\Phi(0)=x\in\state$
and the transition semigroup $\{P^t\}$:
For any $t\ge 0$, $x\in\state$,  $A\in \clB$,
$$
P^t(x,A):=\Prob_x\{\Phi(t)\in A\}:=\Pr\{\Phi(t)\in A\,|\,\Phi(0)=x\}.
$$

Recall that the kernel $P^t$
acts as a linear operator on functions $f:\state\to\RL$ on the
right and on signed measures $\nu$ on $(\state,\clB)$ on
the left, respectively, as,
\archive{minor comment 2 asks us to clarify this, so I delete:, or any kernel $P$ on $\state$,
\\
also, I have a good reason to hold off on capital $F$!}
$$
P^tf\,(x)=\int f(y)P^t(x,dy),
\;\;\;\;
\nu P^t\, (A) = \int \nu(dx)P^t(x,A),
\;\;x\in\state,\,A\in\clB,
$$
whenever the above integrals exist.  Also, for any
signed measure $\nu$ on $(\state,\clB)$ and any
function $f:\state\to\RL$ we write $\nu(f):=\int f d\nu$,
whenever the integral exists.
In this paper we will constrain the domain of functions $f$ to a Banach space defined with respect to a weighted $L_\infty$ norm. 
\archive{to begin to address concern over domain in  math comment 4}

One of the central assumptions we make throughout the paper
is the following regularity condition on the semigroup:
\archive{save this: 
wild indeed!!!
-y
On 5/26/2013 6:23 PM, Sean Meyn wrote:
Thank you - I feel like a complete slob, but all is o.k.
It is so funny that I didn't catch this.  We know,
DR  = R-I
or
I = R-DR
If R has a smooth density, then so does I  !!
Conclusion: R NEVER EVER has a smooth density, for any Markov model.
Wild.
 -Sean}
$$
\left. 
\mbox{\parbox{.75\hsize}{\raggedright
The transition semigroup admits  a continuous density:
There is a continuous function $p$ on $(0,\infty)\times\state\times \state$ such that,
\[
P^t(x,A) = \int_A p(t,x,y)\, dy\,,\qquad x\in\state,\ A\in\clB.
\] }}
\right\}
\eqno{\hbox{\bf (A2)}}
$$
H\"ormander's theorem  \cite[Thm.~38.16]{rogers-williams:II} gives 
 sufficient conditions for (A2). 
Explicit bounds on the density are also available;
see \cite{polcinmen12} and its references.

\subsection{Irreducibility, drift, and semigroup approximations} 

The ergodic theory of continuous-time Markov processes 
is often most easily addressed by translating results 
from the discrete-time domain.  This is achieved,
e.g., in \cite{dowmeytwe95a,meytwe93a,meytwe93b,meytwe93e} 
through consideration of the Markov chain whose transition 
kernel is defined by one of the {\em resolvent kernels}
of $\bfPhi$, defined as, 
\begin{equation}
R_\alpha \eqdef   \int_0^\infty  e^{-\alpha t} P^t\, dt,
\;\;\;\;\alpha>0.
\label{resolvent}
\end{equation} 
In the case $\alpha=1$ we simply
write $R:=R_1 = \int_0^\infty  e^{-  t} P^t\, dt$,
and call $R$ ``the'' resolvent kernel
of the process $\bfPhi$.

The family of resolvent kernels
$\{R_\alpha\}$ is simply the Laplace transform of the semigroup, 
so that each $R_\alpha$ admits a density under (A2).     
This density will not be continuous in general, so we will 
truncate to obtain the positive kernel,
\begin{equation}
\barR_\alpha = \int_{t_0}^{t_1}   e^{-\alpha t} P^t\, dt \,,
\label{e:Rhat}
\end{equation}
where $0<t_0<t_1<\infty$ will  be chosen so that 
$\barR_\alpha$ is a good approximation to $R_\alpha$.   
The approximation admits a continuous density under (A2),
\begin{equation}
\barR_\alpha(x,A) =   \int_A \barxi_\alpha(x,y)\, dy
\,,\qquad x\in\state,\ A\in\clB,
\label{e:har}
\end{equation}
where for each $x,y$,
\[
\barxi_\alpha(x,y) =  \int_{t_0}^{t_1}  e^{-\alpha t} p(t,x,y)\, dt.
\]

\begin{proposition}
\label{t:StrongFeller}
Under Assumptions~{\em (A1)} 
and~{\em (A2)}, for any $\alpha>0$,
the resolvent kernel $R_\alpha$ 
has the strong Feller property.  Moreover,  there exist
continuous functions $s_\alpha, n_\alpha\colon\state\to \Re_+$ that 
are not identically zero, and satisfy,
\begin{equation}
R_\alpha (x, dy)  \ge s_\alpha(x) n_\alpha(y)  \, dy, \qquad x,y\in\state.
  \label{e:Rsmall}
\end{equation}
\end{proposition}

\begin{proof} 
Condition~(A2)  implies the strong Feller property for the semigroup
$\{P^t\}$,
that is, 
the function $P^tf$  is
continuous whenever $f$ is measurable and bounded, 
for $t>0$.  It is then straightforward to show that the kernel 
$R_\alpha$ also has the strong Feller property for 
any $0<\alpha<\infty$.
\archive{Feller here to address minor comment 1}

The existence of the functions
$s_\alpha$ and $n_\alpha$ in the 
lower bound follows from the obvious bound 
$R_\alpha\ge \barR_\alpha$. 
\end{proof}

The function $s_\alpha$ and 
the positive measure defined by 
$\mu_\alpha(dy) = n_\alpha(y) dy$ are called \textit{small}, 
and the inequality  \eqref{e:Rsmall}
 is written in terms of an
outer product as,
$R_\alpha  \ge s_\alpha\otimes\mu_\alpha$;
cf.\ \cite{nummelin:book,meyn-tweedie:book2}.
Without loss of generality
(through normalization)
we always assume that 
$\mu_\alpha(\state)=1$, so that $\mu_\alpha$ defines a probability 
measure on $(\state,\clB)$.  

Some of the results on ergodic theory require the 
following `reachability' condition for $\bfPhi$;
it is a mild irreducibility assumption:
$$
\left. 
\mbox{\parbox{.75\hsize}{\raggedright
	There is a state $x_0\in\state$ such that,
	for any $x\in\state$ and any open set $O$
	containing $x_0$, we have,
	$$
	P^t(x,O)>0,\qquad \hbox{for all $t\geq 0$ sufficiently large.}
	$$
	}}
\right\}
\eqno{\hbox{\bf (A3)}}
$$
Under (A3) we are assured of a single communicating class,
since then the process is $\psi$-irreducible
and aperiodic
with $\psi(\varble)\eqdef R (x_0,\varble)$:
For all $x\in\state$ 
and all $A\in\clB$ such that $R(x_0,A)>0$,
we have,
$$P^t(x,A)>0, \;\;\;\;\;\;\mbox{for all $t$
sufficiently large.}$$
See 
\cite[Theorem~3.3]{meytwe93a} and \Proposition{hypoB} below.

Recall that the {\em generator} of $\bfPhi$
is expressed, for bounded $C^2$ functions 
$f\colon\state\to\RL$, as,  
\begin{equation}
\clD f \, (x) = \sum_i  u_i(x) \frac{d}{\, dx_i} f\, (x)
        +
        \half \sum_{ij}
                \Sigma_{ij}(x) \frac{d^2}{\, dx_i \, dx_j} f\, (x),
		\qquad x\in\state,
  \label{e:diffGen}
\end{equation}
or, in more compact notation,
\[
\clD= u\cdot\nabla + \half \trace(\Sigma \nabla^2) ,
\]
where $\Sigma=MM^\transpose$.
Rather than restricting attention to $C^2$ functions, we consider the extended generator, as in our previous work \cite{kontoyiannis-meyn:II,meytwe93a}.   The function  $f\colon\state\to\Re$ is in the domain of $\clD$ 
if there exists a function $g\colon\state\to\Re$ such that the stochastic process defined by,
\begin{equation}
M(t) =  f(\Phi(t)) - \int_0^t g(\Phi(s))\, ds     ,\qquad t\geq 0,
\label{e:extgenMart}
\end{equation}
is a \textit{local martingale}, for each initial condition $\Phi(0)$ \cite{ethkur86,rogers-williams:II}.
We then write $g=\clD f$.

If $\bfmM$ is in fact a martingale, then the following integral equation holds:
\begin{equation}
 P^t f     = f +\int_0^t P^s g\, ds     ,\qquad t\geq 0\,.
\label{e:extgenMartA}
\end{equation}
See \Proposition{ResolveDR}  for a class of functions $(f,g)$ solving \eqref{e:extgenMartA}.

Fleming's \textit{nonlinear generator} \cite{fle78a} 
for the continuous-time Markov process $\bfPhi$ 
is defined via,
\begin{equation}
\clH(F)\eqdef e^{-F} \clD e^F   \, .
\label{fle78a}
\end{equation}
Its domain is the set of functions $F$ for which $f= e^F$ is
in the domain of $\clD$.   
Theory surrounding multiplicative ergodic theory and large 
deviations based on the nonlinear generator is described,
e.g., in 
\cite{fen99a,wu:01,feng-kurtz:book,kontoyiannis-meyn:II}. We say that the {\em Lyapunov drift criterion} (DV3)  
{\em holds with respect to the Lyapunov function 
$V:\state\to(0,\infty]$}, if
		there exist a function 
		$W\colon\state\to [1,\infty)$, 
                a compact set $C\subset\state$,
                and constants $\delta>0$, $b<\infty$,
such that,
$$
\clH(V)\leq -\delta W+b\ind_C\, .
\eqno{\hbox{(DV3)}}
$$
In most of the subsequent results, the following
strengthened version of (DV3) is assumed:
$$
\left. 
\mbox{\parbox{.75\hsize}{\raggedright
	Condition (DV3) holds with respect to
	continuous functions $V,W$ that have
	compact sublevel sets. 
	}}
\right\}
\eqno{\hbox{\bf (A4)}}
$$
Recall that the sublevel 
sets of a function $F\colon\state\to\Re_+$ are defined by,
\begin{equation}
C_F(r) =\{ x\in\state : F(x) \le r\},\qquad r\ge 0. 
\label{CFr}
\end{equation} 

Note that the local Lipschitz assumption 
in (\ref{eq:SDE}) together with (DV3)
imply (A1); 
namely, that (\ref{eq:SDE}) 
has a unique strong solution $\bfPhi$ 
with continuous sample paths;
see \cite[Theorem~2.1]{meytwe93b} and
\cite[Theorem~11.2]{rogers-williams:II}.

Conditions (A1--A4) are essentially equivalent to
(but weaker than) the conditions imposed 
by Donsker and Varadhan in their pioneering 
work \cite{donsker-varadhan:I-II,donsker-varadhan:III,donsker-varadhan:IV}.
Condition (DV3) is a generalization of the drift condition 
of  Donsker and Varadhan.  Variants of this drift condition are used 
in \cite{balaji-meyn,wu:01,reytho01a,kontoyiannis-meyn:I,guileowuyao09},  
and (DV3) is the central assumption in \cite{kontoyiannis-meyn:II}.

One important application of (DV3) here and in 
\cite{kontoyiannis-meyn:II} is in the truncation of the state 
space -- this is how we obtain a hidden Markov model (HMM)
approximation, where the 
approximating process eventually evolves on a compact set.  
Important related results have been obtained by Wu; see
\cite{wu95a,wu:01,wu04} and the references therein.
Wu, beginning with his 1995 work \cite{wu95a}, has developed 
a similar truncation 
technique for establishing large deviations limit theorems,  
as well as the 
existence of a spectral gap in the $L_p$ norm, in  a spirit similar to this 
paper and \cite{kontoyiannis-meyn:II}.  For bibliographies on 
these methods and other applications see \cite{gonwu06a,guileowuyao09}.    
A significant further contribution of the present paper,
in contrast to the 
earlier work mentioned, is the introduction of the {\em weighted $L_\infty$ 
norm} for applications to large deviations theory and spectral theory.  
In particular, for {\em non-reversible} Markov processes,  	
the theory is greatly simplified and extended by posing spectral theory 
within the weighted $L_\infty$ framework.   



The weighted norm is based on the Lyapunov function $V$ from (DV3).
We let $v=e^V$ and define, for any measurable function 
$g\colon\state\to\Re$, 
\[
\|g\|_{v}\eqdef \sup\Bigl\{ \frac{ |g(x)|}{v(x)} : x\in\state\Bigr\};
\]
cf.\ 
	\cite{vei69,kar85b,kar85a}
	and the discussion in \cite{meyn-tweedie:book2}. 
The corresponding Banach space is denoted
$\Lv  \eqdef \{ g\colon\state\to\Re : 
\|g\|_{v} <\infty\}$,
and the induced operator norm on linear operators $ K\colon\Lv\to\Lv$ 
is,
\[
\lll K \lll_v \eqdef 
 \sup \Bigl\{ \frac{\| K h\|_{v}}{\|h\|_{v}} 
:  h\in L^{v}_\infty,\ \|h\|_{v}\neq 0\Bigr\}.
\]
An analogous weighted norm is defined for
signed measures $\nu$ on $(\state,\clB)$
via,
$$\|\nu\|_v:=\sup\Big\{\frac{|\nu(h)|}{\|h\|_v}
:  h\in L^{v}_\infty,\ \|h\|_{v}\neq 0\Bigr\}.
$$

The operator on $\Lv$ induced 
by the resolvent kernel $R$ will be shown to
satisfy $\lll R\lll_v<\infty$ under (DV3) (see \Proposition{DV4rlx}),  and it is known that $\lll P^t \lll_v$ is uniformly bounded in $t$ under this condition (see the proof of Theorem~6.1 of \cite{meytwe93b}).
\archive{math comment (7) Page 4: You need to prove that the Pt keeps Lv1 invariant. Otherwise all later arguments have problem one way or the other. (For instance, you implicitly assumed that Ptg is an element in Lv1 in (8)). I suspect that under (DV3), this might be true, but a proof is needed.}

All of the approximations in this paper are obtained 
with respect to $\lll\cdot\lll_v$. Our main results 
are all based on  \Theorem{Rapprox} below, which 
establishes conditions ensuring that the semigroup 
$\{P^t\}$ of the process $\bfPhi$ can 
be approximated (in this weighted operator norm)
by a semigroup written in terms of finite-rank
kernels. In particular, \Theorem{Rapprox}  states that the Donsker-Varadhan 
condition (DV3) holds if and only if the process
$\bfPhi$ can be approximated by an HMM in operator norm.


The approximating HMM is based on a generator that is a finite-rank
perturbation of the identity, of the form,
\begin{equation}
 \clE = \kappa \Bigl[- I+ \IND_{C_0}\otimes\nu_1
	+ \sum_{i,j=1}^N  r_{ij} \,   
	\IND_{C_i}\otimes\nu_j \Bigr]
 \label{Kgenerate}
\end{equation}
where $\{C_i:1\leq i\leq N\}$ is
a finite collection of disjoint, precompact sets,
$C_0$ is the complement 
of their union,
$\state\setminus\cup_{1\leq i\leq N} C_i$,
and $\{\nu_i\}$ are probability measures on $(\state,\clB)$
with each $\nu_i$ supported on $C_i$.
The constants  $\kappa$ and $\{r_{ij}\}$ are nonnegative, and the 
$\{r_{ij}\}$  define a transition matrix on   
the finite set $\{1,2,\ldots,N\}$.  
The approximating semigroup is expressed as the exponential family,
\begin{equation}
 Q^t = e^{t\clE},\qquad t\geq 0, 
 \label{KQ}
\end{equation}
where the exponential is defined via the usual 
power-series expansion. The family of resolvent kernels of the semigroup $\{Q^t\}$ 
is denoted $T_\alpha$, $\alpha>0$, where,
\begin{equation}
T_\alpha=\int_0^\infty e^{-\alpha t}  Q^t\, dt.
\label{KQresolve}
\end{equation}
The generator $\clE$ will be constructed so that $T_\alpha$ approximates $R_\alpha$ in $\Lv$ for $\alpha$ in a neighborhood of unity (see 
\Proposition{GeneratorResolveMain}).


While connections between separability and condition~(DV3) 
were previously established in \cite{wu04,kontoyiannis-meyn:II},   
\Theorem{Rapprox} goes well beyond prior work.   In particular, 
the equivalence between (DV3) and the finite-state HMM approximation in 
the strong sense given in the theorem
cannot be foreseen based on earlier results.
Although the main results 
of \cite{wu04,kontoyiannis-meyn:II} admit extensions to Markov models 
in continuous time,  essential properties of a diffusion must be exploited 
to obtain the uniform bound
\eqref{e:semiApprox}.

\begin{theorem}
\label{t:HMMapprox}
{\sc [(DV3) $\Leftrightarrow$ HMM approximation]}
For a Markov process $\bfPhi$ on $\state$ satisfying
conditions {\em (A1)}, {\em (A2)} and {\em (A3)}, 
the following are equivalent:

\begin{romannum}
\item {\sc Donsker-Varadhan Assumption:\ }   
Condition {\em (DV3)} holds 
in the form given in~{\em (A4)}.

\item {\sc HMM approximation:\ }
There exists a continuous function 
$v\colon\state\to [1,\infty)$ with compact sublevel sets
(possibly different from the 
function $v$ in {\em (i)}),
such that the following approximations hold:
For each $\epsy>0$ and $\delta\in (0,1)$, 
there exists a semigroup $\{Q^t\}$ as in \eqref{KQ} 
with generator $\clE$ of the form
given in \eqref{Kgenerate} and with
an associated family of resolvent kernels
$\{T_\alpha\}$ as in \eqref{KQresolve},
satisfying the following:
\begin{alphanum}
\item {\em Resolvent approximation:}  The resolvent kernels 
\eqref{resolvent} and  \eqref{KQresolve}  satisfy,
\[
\lll R_\alpha - T_\alpha \lll_{v}  \le \epsy,\qquad  
\delta\le \alpha \le \delta^{-1}.
\]
\item {\em Semigroup approximation:} 
\begin{equation}
\| P^t g - Q^t g \|_{v} \le 
\epsy( \|  g\|_v+ \| \clD^2 g\|_v) ,\qquad t\ge 0\,,
\label{e:semiApprox}
\end{equation}
for each $C^4$ function $g$ with compact support.


\item {\em Invariant measure approximation: }
The two semigroups have unique invariant probability measures $\pi$ 
and $\varpi$, satisfying, 
\archive{minor comment 6 addressed here by removing Psi}
\[
\|\pi -\varpi\|_{v} \le\epsy.
\]
\end{alphanum}
\end{romannum}
\end{theorem}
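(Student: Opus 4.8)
The two implications are established separately: the forward direction (i)$\Rightarrow$(ii) rests on the finite-rank resolvent approximation of \Theorem{Rapprox}, while the converse goes through a quasi-compactness argument for the resolvent on $\Lv$. For (i)$\Rightarrow$(ii), assume (DV3) in the form (A4). First I would apply \Theorem{Rapprox} to obtain, for a prescribed accuracy $\epsy'>0$, a continuous weight $v$ with compact sublevel sets and a finite-rank kernel approximating $R$ to within $\epsy'$ in $\lll\cdot\lll_v$, where crucially this kernel already carries the positivity and ``cell'' structure of \eqref{Kgenerate}. I would then identify this kernel with the resolvent $T_1$ of a generator $\clE$ of the form \eqref{Kgenerate}: since $T_\alpha=(\alpha I-\clE)^{-1}=(\alpha+\kappa)^{-1}\bigl(I-\tfrac{\kappa}{\alpha+\kappa}\Pi\bigr)^{-1}$, where $\Pi$ is the stochastic part of \eqref{Kgenerate}, one has $\clE=I-T_1^{-1}$, mirroring $\clD=I-R^{-1}$, and the ``scalar plus finite rank'' form of $T_1$ matches what \Theorem{Rapprox} produces; this yields part (a) at $\alpha=1$. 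To extend (a) to all $\alpha\in[\delta,\delta^{-1}]$ I would use that the $T_\alpha$ come from a single fixed generator $\clE$, together with the resolvent identities $R_\alpha-R_1=(1-\alpha)R_\alpha R_1$ and $T_\alpha-T_1=(1-\alpha)T_\alpha T_1$ and the uniform bounds $\sup_{\delta\le\alpha\le\delta^{-1}}\bigl(\lll R_\alpha\lll_v\vee\lll T_\alpha\lll_v\bigr)<\infty$ — the bound on $R_\alpha$ from (DV3), which gives the drift inequality $\clD v\le-\delta v+b'\ind_C$ and hence $\sup_t\lll P^t\lll_v<\infty$, and the bound on $T_\alpha$ from the finite-state structure — to propagate the $\alpha=1$ estimate across the compact interval.

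The semigroup bound (b) is the step I expect to be the main obstacle. The natural starting point is the variation-of-constants identity
\[
P^t g-Q^t g=\int_0^t Q^{t-s}(\clD-\clE)P^s g\,\rd s,\qquad t\ge0,
\]
which reduces matters to estimating $(\clD-\clE)P^s g=T_1^{-1}(R-T_1)P^s(g-\clD g)$. Here $\lll R-T_1\lll_v\le\epsy'$ is small, but $T_1^{-1}=I-\clE$ is bounded only with norm of order $\kappa$; to obtain a genuinely small bound one must exploit the structural form of the approximation produced by \Theorem{Rapprox}, perform an integration by parts in $s$ in the identity above — which is where the hypothesis $\clD^2 g\in\Lv$ enters, since one then has to differentiate $P^s(g-\clD g)$ — and use the uniform ergodicity of $\{Q^t\}$ to supply decay in $t-s$, while choosing $\epsy'$ small relative to $\kappa$. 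For (c), the process $\bfPsi$ generated by $\clE$ reduces, after one jump, to a continuous-time Markov chain on $\{1,\dots,N\}$ with rate matrix $\kappa\{r_{ij}\}$; taking this matrix irreducible yields a unique stationary law $p$ and hence a unique invariant measure $\varpi=\sum_j p_j\nu_j$ for $\bfPsi$. To compare with the invariant measure $\pi$ of $\bfPhi$ (unique under (DV3)), I would write $(\pi-\varpi)(I-R)=\varpi(R-T_1)$ and invert $I-R$ on the subspace $\{g\in\Lv:\pi(g)=0\}$ using the spectral gap of $R$ on $\Lv$ furnished by (DV3), obtaining $\|\pi-\varpi\|_v\le C\,\lll R-T_1\lll_v$; one then fixes $\epsy'$ small enough that (a), (b) and (c) all hold with the target $\epsy$.

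For the converse (ii)$\Rightarrow$(i), fix $\alpha=\delta$ in (ii)(a): then $\lll R_\delta-T_\delta\lll_v\le\epsy$ with $T_\delta=(\delta+\kappa)^{-1}I+G_\delta$ and $G_\delta$ finite-rank, so the essential norm of $R_\delta$ on $\Lv$ satisfies $\|R_\delta\|_{\mathrm{ess}}\le(\delta+\kappa)^{-1}+\epsy$, whence its essential spectral radius is at most $(\delta+\kappa)^{-1}+\epsy$. For $\epsy$ small this lies strictly below $\delta^{-1}$, the eigenvalue of $R_\delta$ on constants, so $R_\delta$ is quasi-compact on $\Lv$; together with $\psi$-irreducibility and aperiodicity (A3) this forces $\delta^{-1}$ to be a simple isolated eigenvalue with the remainder of $\sigma(R_\delta)$ in a strictly smaller disc, i.e.\ $R$ has a spectral gap on $\Lv$. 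By the reverse direction of \Theorem{Rapprox} — equivalently, the characterization that a spectral gap for $R$ on a weighted $L_\infty$ space is equivalent to $v$-separability of $R$ and to (DV3) — condition (DV3) holds in the form (A4), the weight realizing (i) being possibly different from $v$. The one point requiring care here is the bookkeeping of the essential spectrum, namely controlling the scalar term $(\delta+\kappa)^{-1}I$ of $T_\delta$ well enough that the quasi-compactness threshold is actually met.
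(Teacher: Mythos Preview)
Your outline misses the central device of the paper: the \emph{intermediate jump process} $\bfPhi^\kappa$ with bounded generator $\clD_\kappa=\kappa[\kappa R_\kappa-I]$. The paper never tries to compare the unbounded $\clD$ directly with $\clE$; instead it approximates at the level of bounded generators, showing (\Proposition{i-ii}) that $\lll\clD_\kappa-\clE\lll_v$ can be made as small as desired for each fixed $\kappa$. This is what makes both (a) and (b) go through.

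For (a), your plan to propagate a single $\alpha=1$ estimate via the resolvent identities runs into trouble: writing $R_\alpha-T_\alpha=(R_1-T_1)(I+(1-\alpha)T_\alpha)+(1-\alpha)R_1(R_\alpha-T_\alpha)$ does not iterate when $\alpha$ is near $\delta$, since $|1-\alpha|\,\lll R_1\lll_v$ need not be $<1$. The paper instead bounds $\lll R_\alpha-R_{\kappa,\alpha}\lll_v\le 4(1+b')/\kappa$ uniformly in $\alpha\le\kappa$ (\Proposition{RRapprox}), and then bounds $\lll R_{\kappa,\alpha}-T_\alpha\lll_v$ using the \emph{bounded} difference $\clD_\kappa-\clE$ and the explicit inverses $R_{\kappa,\alpha}=[\alpha I-\clD_\kappa]^{-1}$, $T_\alpha=[\alpha I-\clE]^{-1}$.

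For (b), the obstacle you flag is exactly the one the intermediate process removes. The paper splits $P^tg-Q^tg=(P^tg-P^t_\kappa g)+(P^t_\kappa g-Q^tg)$. The first piece uses the algebraic identity $\clD-\clD_\kappa=-\kappa^{-1}\clD_\kappa\clD$ (from $\clD_\kappa=\clD\,\kappa R_\kappa$), which after variation-of-constants produces $\int_0^t P_\kappa^s P^{t-s}(\clD_\kappa\clD g)\,ds$; this is where $\|\clD^2 g\|_v$ enters, and the factor $\kappa^{-1}$ supplies the smallness---no integration by parts is needed. The second piece uses $\lll\clD_\kappa-\clE\lll_v\le\epsy_0$, a genuine small bounded-operator difference, together with the $v$-uniform ergodicity of $\bfPhi^\kappa$ and $\bfPsi$ (\Proposition{HMMergodic}) to control the time integral. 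Your direct route through $T_1^{-1}(R-T_1)$ carries an unavoidable $O(\kappa)$ factor from $T_1^{-1}$, and the integration-by-parts fix you sketch is not fleshed out; I do not see how it recovers a bound uniform in $t$ without reinventing the two-step decomposition.

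For (ii)$\Rightarrow$(i), your quasi-compactness/essential-spectrum argument is more machinery than the paper uses and invokes an equivalence (spectral gap $\Leftrightarrow$ (DV3)) that is not established here. The paper's converse is elementary: (ii)(a) with $\alpha=1$ exhibits $R$ as close in $\lll\cdot\lll_v$ to a kernel supported on a compact set, and \Proposition{RapproxConverse} then \emph{constructs} the Lyapunov pair $(V_-,W_-)$ directly by setting $v_-=R\bigl(v+\sum_n v\ind_{\ystate_n^c}\bigr)$ and checking $\clH(\log v_-)\le -W_-+b\ind_C$ via the resolvent equation.
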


\begin{proof}
The proof is based on several results
contained in Section~\ref{finite}:

For the implication  (i) $\Rightarrow$ (ii), the function $v$ appearing in (A4) can be chosen the same as the function $v$ appearing in (iia)--(iic).
The implication (i) $\Rightarrow$ (iia) is contained 
in \Proposition{GeneratorResolveMain}; 
the implication (i) $\Rightarrow$ (iib) follows 
from \Proposition{HMMapproxPPkappa} combined with  
\Proposition{HMMapproxPPkappaB};
and the implication (i) $\Rightarrow$ (iic)
is given in Corollary~\ref{t:hidden}.

Finally, the implication (ii) $\Rightarrow$ (i) 
follows from Proposition~\ref{t:RapproxConverse}:  Under (ii) 
it follows that (A4) holds for continuous functions
$V_-, W_-$,  where $V_-\in L_\infty^{V}$.
\end{proof}


We next consider the probabilistic side of this theory,
and we show that a Markov process with generator of the form 
given in \eqref{Kgenerate} admits a 
representation as a finite-state
 hidden Markov model.

\subsection{Hidden Markov model approximations}
\label{HMMapprox}

A finite-state space hidden Markov model (HMM) in continuous time 
is defined as a pair $(\Upsilon,\bfmI)$,  where $\bfmI$ is a Markov 
process with finite state space $\state_I$.
The first component
$\Upsilon$ is called the observation process; it is 
a stochastic process taking values in some set $\ystate$.  
The joint dynamics are described as follows:  There is a family 
of probability measures $\{\nu_i\}$  on $\ystate$ such that,
for all measurable $A\subset \ystate$,
\[
\Prob\{ \Upsilon(t)\in A \mid( \Upsilon(s), I(s)) ,\ s <t ;\  I(t) = i\} = \nu_i(A) ,\quad i\in \state_I\, .
\]  
Here we explain how, under our conditions,
the continuous time Markov process $\bfPhi$ 
may be approximated by the finite-state process 
$\Upsilon$ of an appropriately constructed HMM. 
In fact, here the HMM will be special, 
in that the process $\Upsilon$ itself will be Markovian.   

Recall that the generator $\clD$ of $\bfPhi$ will be 
approximated by a generator $\clE$ of the form given 
in \eqref{Kgenerate}.
Let $\ystate$ denote the compact set 
$\ystate \eqdef\overline{\bigcup_{i=1}^N C_i}$,  and let 
 $\bfPsi$ denote the continuous-time Markov process
with generator $\clE$, and with corresponding transition
semigroup $\{Q^t\}$ defined in \eqref{KQ}.    The process $\bfPsi$ will define the observation process $\Upsilon$ in our HMM approximation.

A probabilistic description of $\bfPsi$  is based on a sequence of 
jump times $\{\tau_k : k\ge 0\}$, with $\tau_0:=0$. 
The description of $\tau_1$ depends on the  initial 
condition $\Psi(0)=x$:  Let $i$ denote the unique index for which
$x \in C_i$.   If $i\neq 0$, we construct $N$ independent exponential 
random variables with respective means equal to 
$\{(\kappa r_{ij})^{-1} : 1\le j\le N\}$, and the 
first jump after time $\tau_0:=0$ is defined as the minimum 
of these exponential random variables.  If $i=0$, i.e., $\Psi(0)=x\in C_0$,
 then   $\tau_1$  is 
given by the value of an exponential random variable
with mean $1/\kappa$.   Letting $j$ denote the index 
corresponding to the minimizing exponential 
random variable if $i\neq 0$, or taking $j=1$ if $x\in C_0$,
we define $\Psi(t) =x$ for $0=\tau_0\le t< \tau_1$, 
and let  $\Psi(\tau_1)$ be a sample from 
the distribution $\nu_j$.
\archive{minor comment 7}

This procedure is continued 
iteratively to define the sequence of sampling times 
$\{\tau_k\}$ along with the jump  process $\bfPsi$.
To see that $\bfPsi$ can be viewed as an HMM
we first present a simplified expression for the semigroup
$\{Q^t\}$. 

\begin{proposition}
\label{t:HMMa}
Consider the process $\bfPsi$ with generator
$\clE$ as in {\em (\ref{Kgenerate})} and semigroup 
$\{Q^t\}$ as in {\em \eqref{KQ}}. 
If the initial state $\Psi(0)$ is distributed
according to some probability measure
$\Psi(0)\sim \mu$ of the form $\mu=\sum_{i=1}^N p_i \nu_i$, 
where the vector $p=(p_1,p_2,\ldots,p_N)\in \Re^N_+$ 
satisfies $\sum p_i=1$, then the 
distribution $\mu Q^t$ of $\Psi(t)$ at time $t>0$ 
can be expressed as,
$$ \mu Q^t = \sum p_i(t) \nu_i,\;\;\;\;t>0,\;\;\;\;
\mbox{where}\;\;\;\;
p(t) = e^{-\kappa (I-r)t} p\, ,
$$
and $\kappa,r=\{r_{ij}\}$ are the coefficients
of the generator $\clE$ in \eqref{Kgenerate}.
\end{proposition}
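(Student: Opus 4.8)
\medskip
\noindent\textbf{Proof proposal.}\;
The plan is to observe that the $N$-dimensional space of signed measures $\clN\eqdef\span\{\nu_1,\dots,\nu_N\}$ is left-invariant under the generator $\clE$, to identify the restriction of $\mu\mapsto\mu\clE$ to $\clN$ with an explicit $N\times N$ matrix, and then to read off the action of $Q^t$ on $\clN$ as the exponential of that matrix.

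First I would compute $\mu\clE$ for an arbitrary $\mu=\sum_{k=1}^N p_k\nu_k\in\clN$. Since $C_1,\dots,C_N$ are disjoint and each $\nu_k$ is a probability measure supported on $C_k$, we have $\nu_k(C_i)=\delta_{ki}$, hence $\mu(C_i)=p_i$ for $1\le i\le N$, while $\mu(C_0)=0$ because $C_0$ is disjoint from every $C_k$. Using $\mu(\IND_{C_i}\otimes\nu_j)=\mu(C_i)\,\nu_j$ in \eqref{Kgenerate} then gives
\ben
\mu\clE
&=& \kappa\Bigl[-\mu+\mu(C_0)\,\nu_1+\sum_{i,j=1}^N r_{ij}\,\mu(C_i)\,\nu_j\Bigr]\\
&=& \kappa\sum_{j=1}^N\Bigl(\sum_{i=1}^N p_i r_{ij}-p_j\Bigr)\nu_j .
\een
Thus $\mu\clE\in\clN$, and in the coordinate (row) vector $p\in\Re^N$ the map $\mu\mapsto\mu\clE$ is $p\mapsto \kappa\,p(r-I)=-\kappa\,p(I-r)$. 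The essential point is that the term $\IND_{C_0}\otimes\nu_1$ drops out, precisely because every $\nu_k$ is supported off $C_0$; this is exactly what confines the dynamics to the $N$-dimensional coordinate space and makes the HMM picture work.

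It then remains to exponentiate. Since each $C_i$ is precompact and $v$ is continuous, $\IND_{C_i}\in\Lv$ and $\nu_j(v)<\infty$, so every outer product $\IND_{C_i}\otimes\nu_j$, and hence $\clE$, is a bounded operator, and $Q^t=e^{t\clE}=\sum_{n\ge0}\frac{t^n}{n!}\clE^n$ converges in operator norm. Applying this to a fixed $\mu\in\clN$ term by term, and using that $\clN$ is finite-dimensional (hence closed) and invariant under every $\clE^n$, one gets $\mu Q^t=\sum_{n\ge0}\frac{t^n}{n!}\mu\clE^n\in\clN$, which in coordinates is $p\,e^{\kappa(r-I)t}=p\,e^{-\kappa(I-r)t}=p(t)$; that is, $\mu Q^t=\sum_i p_i(t)\nu_i$ as claimed. (As a consistency check, since $r$ is a transition matrix $r\bfone=\bfone$, so $\sum_i p_i(t)\equiv\sum_i p_i=1$ and $\mu Q^t$ remains a probability measure.) There is no real obstacle here; the only points that need care are the vanishing of the $\IND_{C_0}\otimes\nu_1$ contribution noted above, the bookkeeping of whether coordinate vectors act as rows or columns in the matrix exponential, and the routine justification, via boundedness of $\clE$, that the power series defining $Q^t$ may be applied coordinatewise on $\clN$.
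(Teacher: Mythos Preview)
Your proof is correct and follows essentially the same approach as the paper: both compute $\nu_i\clE=\kappa(-\nu_i+\sum_j r_{ij}\nu_j)$, observe that the span of $\{\nu_i\}$ is $\clE$-invariant with the restriction given by the matrix $-\kappa(I-r)$, and then exponentiate. The only cosmetic difference is that the paper passes through the forward ODE $\frac{d}{dt}\nu_iQ^t=\nu_i\clE Q^t$ to obtain a linear ODE for the coefficient matrix $\varrho(t)$, whereas you work directly with the power series for $Q^t$; these are equivalent ways of computing the same matrix exponential.
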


\begin{proof}
It suffices to prove the result with $\mu=\nu_i$ for some $i$;
the general case follows by linearity. 

The power series representation of $Q^t$ 
implies that $\nu_i Q^t$ 
can be expressed as a convex combination of $\{\nu_j\}$ for each $t$,
\begin{equation} 
   \nu_i Q^t 
=   \sum_{j=1}^N    \varrho_{ij}(t) \nu_j\, ,\qquad t\geq 0.
\label{PK}
\end{equation}
An expression for the coefficients $\{\varrho_{ij}(t)\}$ can
be obtained from the  
differential equation,
\[
\ddt Q^t  = \clE Q^t,
\] 
as follows: Writing 
$\nu_i \clE = \kappa\bigl( - \nu_i + \sum_{j=1}^N r_{ij} \nu_j\bigr)$, 
we conclude that, for any $t\geq 0$,
\[
\ddt \nu_i Q^t 
=  \kappa \Bigl[- \nu_i + \sum_{j=1}^N   r_{ij}  \nu_j \Bigr] Q^t 
=  \kappa  \sum_{k=1}^N \Bigl[- \varrho_{ik}(t)\nu_k + \sum_{j=1}^N   r_{ij}     \varrho_{jk}(t)\nu_k \Bigr] .
\]
Therefore, the coefficients $ \varrho\in\Re^{N^2}$ 
appearing in \eqref{PK}  satisfy,
\[
\ddt \varrho_{ik} (t) 
=  \kappa \Bigl[- \varrho_{ik}(t)+ \sum_{j=1}^N r_{ij}\varrho_{jk}(t)\Bigr] .
\]
Given the initial condition $\varrho_{ij} (0)=I$, 
the solution to this ODE is given by,
$\varrho(t)=e^{-\kappa(I-r) t},$
$t\geq 0$, as required.
\end{proof}

For the HMM construction, let $\bfmI$ denote 
a finite-state, continuous-time Markov 
process, 
with values in $\{0,1,2,\dots,N\}$.
Its rate matrix is denoted by $q_{ij}\eqdef \kappa r_{ij} $ 
for $i\neq j$, and 
$q_{ii}\eqdef -\sum_{j\neq i} q_{ij}$.   
We take $r_{01}=1$ and $r_{0i}=0$
for all $i\neq 1$.

Written as an $(N+1)\times (N+1)$ 
matrix, this becomes $q= -\kappa(I-r)$.   
The process $\bfmI$ is the hidden state process;
the set $\state_I=\{1,2,\dots,N\}$ will be an absorbing set for $\bfmI$. 
Conditional on $\bfmI$, we define the  
observed HMM process, denoted $\bfUpsilon=\{\Upsilon(t)\}$, 
as follows.
Letting $\{\tau_i\}$ denote the successive 
jump times of $\bfmI$, 
$\Upsilon(t)$ is constant on the interval 
$t\in[\tau_i,\tau_{i+1})$, and satisfies
for each $ A\in\clB$ and $ i=0,1,2,\ldots$,
\[
\Prob\{\Upsilon(\tau_i)\in A \mid \Upsilon(t), 
t<\tau_i;  \ I(t), t< \tau_i;\ I(\tau_i)=k\} =
\Prob\{\Upsilon(\tau_i)\in A \mid  I(\tau_i)=k\} =\nu_k(A) .
\]
An immediate consequence of the definitions is that $\bfPsi$
can be expressed as an HMM:
\begin{proposition}
\label{t:HMMb}
Suppose that $\Psi(0)\sim \nu_i$ for some $i\geq 1$, and that 
the HMM is initialized in state $i$, i.e., $I(0)=i$.  
Then the jump process $\bfPsi$ and the HMM $\bfUpsilon$ 
are identical in law.
More generally, if $\Psi(0)=x\in C_i$ and $I(0)=i$, for
some $i=0,1,\ldots,N$, then the jump process $\bfPsi$ and 
the HMM $\bfUpsilon$ are identical in law
following the first jump,
\[
\{\Psi(t): t\ge \tau_1\} \eqdist  \{\Upsilon(t): t\ge \tau_1\}.
\]
\end{proposition}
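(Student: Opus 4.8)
The plan is to realize both $\bfPsi$ and the HMM $\bfUpsilon$ as images of one common family of independent random inputs under a single measurable piecewise-constant interpolation map, and then to check that under the stated initializations these inputs have the same joint law. The one point I would handle most carefully --- and the main obstacle --- is the realization of the hidden chain $\bfmI$: it must be taken in its \emph{uniformized} form at rate $\kappa$ (a Poisson clock of rate $\kappa$, each tick a transition of $\bfmI$ according to the stochastic matrix $r$, self-transitions allowed). With the minimal hold-then-jump realization of the rate matrix $q=-\kappa(I-r)$ the observed process would re-sample at rate $\kappa(1-r_{ii})$ from region $i$ rather than at rate $\kappa$, and the two laws would then fail to coincide; the uniformized realization is the one whose jump epochs line up with those of $\bfPsi$, whose construction likewise permits a transition from $C_i$ back into $C_i$ (with a fresh $\nu_i$-draw) with probability $r_{ii}$. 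Everything else is bookkeeping with competing exponentials and regeneration at $\tau_1$.

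First I would extract a canonical description of $\bfPsi$ from its construction. Fix $\Psi(0)\sim\nu_i$ with $i\ge1$. By the elementary competing-exponentials identity --- if $T_1,\dots,T_N$ are independent with $T_j\sim\mathrm{Exp}(\kappa r_{ij})$ then $\min_j T_j\sim\mathrm{Exp}(\kappa)$ (as $\sum_j r_{ij}=1$), $\argmin_j T_j$ has distribution $(r_{i1},\dots,r_{iN})$, and the two are independent --- together with the memoryless restart in the definition, $\bfPsi$ is generated by: a discrete-time Markov chain $\{R_m\}_{m\ge0}$ on $\{1,\dots,N\}$ with $R_0=i$ and transition matrix $\tilde r=(r_{ij})_{i,j=1}^N$; i.i.d.\ $\mathrm{Exp}(\kappa)$ holding times $\{E_m\}_{m\ge1}$ independent of $\{R_m\}$; and values $X_m$ that are conditionally independent given $\{R_m\}$ with $X_m\sim\nu_{R_m}$; through $\Psi(t)=X_m$ for $\tau_m\le t<\tau_{m+1}$, where $\tau_0=0$ and $\tau_m=E_1+\cdots+E_m$. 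Since the jump rate equals the bounded constant $\kappa$, $\tau_m\to\infty$ a.s., so this defines an honest right-continuous jump process.

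Next I would give the matching description of $\bfUpsilon$ and conclude. In the uniformized realization of $\bfmI$, the tick epochs $\{\tau_m\}$ form a rate-$\kappa$ Poisson process and $\bfmI$ moves by $r$ at each tick; with $I(0)=i\ge1$ state $0$ is never reached (since $r_{m0}=0$ for $m\ge1$), so $\{I(\tau_m)\}$ is a DTMC on $\{1,\dots,N\}$ with kernel $\tilde r$ from $i$, the increments $\tau_m-\tau_{m-1}$ are i.i.d.\ $\mathrm{Exp}(\kappa)$ independent of it, and by the defining conditional law of the HMM the $\Upsilon(\tau_m)$ are conditionally independent given $\bfmI$ with $\Upsilon(\tau_m)\sim\nu_{I(\tau_m)}$, while $\Upsilon(t)=\Upsilon(\tau_m)$ on $[\tau_m,\tau_{m+1})$. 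Thus the triple $(\{R_m\},\{E_m\},\{X_m\})$ driving $\bfPsi$ has exactly the same joint law as $(\{I(\tau_m)\},\{\tau_m-\tau_{m-1}\},\{\Upsilon(\tau_m)\})$ driving $\bfUpsilon$, and the path is recovered from the triple by the identical interpolation map; hence $\bfPsi\eqdist\bfUpsilon$, which is the first assertion. (Its one-dimensional marginal form is precisely \Proposition{HMMa}.)

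For the second assertion, let $\Psi(0)=x\in C_i$ be deterministic and $I(0)=i$ for some $i\in\{0,1,\dots,N\}$. The construction of $\bfPsi$ depends on $x$ only through the index $i$: the first holding time is $\tau_1\sim\mathrm{Exp}(\kappa)$, the post-jump region $J_1\sim(r_{i1},\dots,r_{iN})$ (with $J_1=1$ if $i=0$) is independent of $\tau_1$, one has $\Psi(\tau_1)\sim\nu_{J_1}$ conditionally given $J_1$, and from time $\tau_1$ onward $\bfPsi$ evolves as in the first part started from region $J_1$. The uniformized HMM admits the identical description of $(\tau_1,I(\tau_1),\Upsilon(\tau_1))$ from state $i$ (using $r_{0,\cdot}=\delta_1$ when $i=0$), and conditionally on $I(\tau_1)=j$ its post-$\tau_1$ evolution has the law established in the first part from region $j$, which coincides with the conditional law of $\{\Psi(\tau_1+u):u\ge0\}$ given $J_1=j$. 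Since moreover $(\tau_1,J_1)\eqdist(\tau_1,I(\tau_1))$, the joint law of $(\tau_1,\{\Psi(t):t\ge\tau_1\})$ equals that of $(\tau_1,\{\Upsilon(t):t\ge\tau_1\})$, giving $\{\Psi(t):t\ge\tau_1\}\eqdist\{\Upsilon(t):t\ge\tau_1\}$.
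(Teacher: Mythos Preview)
The paper does not supply a proof of this proposition; it is introduced with the line ``An immediate consequence of the definitions is that $\bfPsi$ can be expressed as an HMM,'' and then stated without further argument. Your write-up is therefore a fleshing-out of what the authors left implicit rather than an alternative to an existing proof, and it is correct.

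Your flagging of the uniformization issue is the most valuable part. The paper defines the HMM via ``the successive jump times of $\bfmI$'' with rate matrix $q=-\kappa(I-r)$; read literally as actual state-change times, this would give holding times in region $i$ that are exponential with rate $\kappa(1-r_{ii})$, whereas $\bfPsi$ resamples at every tick of a rate-$\kappa$ Poisson clock and may land back in the same $C_i$ (with a fresh $\nu_i$-draw) with probability $r_{ii}$. You are right that the two laws coincide only under the uniformized realization of $\bfmI$ (Poisson clock of rate $\kappa$, $r$-transitions at each tick, self-transitions allowed), and your proof is written carefully under that reading. The remaining steps --- the competing-exponentials identity to extract the embedded chain and holding-time law, the common interpolation map on the triple $(\{R_m\},\{E_m\},\{X_m\})$, and the regeneration at $\tau_1$ for the second assertion --- are all sound.
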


\subsection{Separability and the spectrum}

The key property we will use to establish that 
a process $\bfPhi$ can be approximated by an 
HMM as in Theorem~\ref{t:HMMapprox}
will be the ``$v$-separability'' of its
resolvent $R$. Following \cite{kontoyiannis-meyn:II}
we say that a kernel $K$ is \textit{$v$-separable}
with respect to some function $v:\state\to[1,\infty)$,
if $\lll K  \lll_{v} <\infty$ and, for each $\epsy >0$, 
there exists a compact set $\ystate\subset\state$ 
and a finite-rank, probabilistic kernel $T$ 
supported on $\ystate$, such that 
$\lll K - T \lll_{v} \le \epsy$.   
By `finite-rank' we mean there are functions $\{ s_i\}$, 
measures $\{\nu_j\}$, and nonnegative constants $\{\theta_{ij}\}$ 
such that,
\begin{equation}
  T = \sum_{i,j=1}^N \theta_{ij} s_i\otimes\nu_j .
 \label{e:K}
\end{equation}
A kernel $T$ is `probabilistic' if $T(x,\state)=1$ for all $x\in\state$.

Our next result gives an alternative characterization
of the Donsker-Varadhan condition (DV3), showing that
it is equivalent to $v$-separability of the resolvent.
A similar result in  discrete time appears in \cite{wu04,kontoyiannis-meyn:II}. 
The implication   (ii) $\Rightarrow$ (i)   is contained 
in \Proposition{RapproxConverse}.   The forward implication (i) 
$\Rightarrow$ (ii) follows from \Proposition{i-ii}.   

\begin{theorem}
\label{t:Rapprox}
{\sc [(DV3) $\Leftrightarrow$ $v$-Separability]}
For a Markov process $\bfPhi$ on $\state$ satisfying
conditions {\em (A1)} and {\em (A2)},
the following are equivalent:
\begin{romannum}
\item {\sc Donsker-Varadhan Assumptions:\ }   
Condition {\em (DV3)} holds in the form
given in~{\em (A4)}.

\item {\sc $v$-Separability:\ }
The resolvent kernel $R$ is $v$-separable,
for a continuous function $v$ with compact
sublevel sets, possibly different from the
one in {\em (i)}.
\end{romannum}
\end{theorem}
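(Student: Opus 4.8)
The plan is to prove the two implications separately, treating (ii)$\Rightarrow$(i) as the "soft" direction and (i)$\Rightarrow$(ii) as the substantive one. For (ii)$\Rightarrow$(i): assume $R$ is $v$-separable, so that for a suitable small $\epsy$ there is a finite-rank probabilistic kernel $T=\sum_{i,j}\theta_{ij}s_i\otimes\nu_j$ supported on a compact set $\ystate$ with $\lll R-T\lll_v\le\epsy$. Because $T$ has finite rank and is supported on a compact set, its operator-theoretic behaviour on $\Lv$ is essentially that of a finite stochastic matrix; in particular one can read off a drift-type inequality for the original resolvent $R$ outside $\ystate$, since there $Rh$ is $\epsy$-close to $Th=0$ relative to $v$. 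Exponentiating/translating this resolvent drift back to the generator — using the identification of the domain of $\clD$ via $P^th=h+\int_0^tP^sf\,ds$ and the standard resolvent-equation manipulation $R=(\,I-\clD\,)^{-1}$ in the appropriate formal sense — yields a function $V$ and the bound $\clH(V)\le-\delta W+b\ind_C$ of (DV3), with $V,W$ continuous and with compact sublevel sets because they are built from $v$ and the compact $\ystate$. This is the content of the cited \Proposition{RapproxConverse}, so I would simply reduce to it.

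For the forward implication (i)$\Rightarrow$(ii), the plan is: starting from (DV3) in the strong form (A4), first establish that $R$ acts boundedly on $\Lv$ with $v=e^V$, i.e. $\lll R\lll_v<\infty$; this follows by integrating the drift inequality $\clH(V)\le-\delta W+b\ind_C$ against the semigroup, a Dynkin-formula computation exploiting that $e^{-V}\clD e^{V}=\clH(V)$ controls $P^tv$. Next, I would use the minorization $R\ge s\otimes\nu$ guaranteed by (A2) to split $R$ into a "small" finite-rank piece plus a remainder that is itself dominated by a sub-stochastic kernel with a genuine drift toward the compact set $C$. The crucial step is then a \emph{truncation}: replace the tail of the state space by a single atom (or by the finite family of precompact sets $\{C_i\}$ exhausting a large sublevel set $C_V(r)$), and show that the error incurred, measured in $\lll\cdot\lll_v$, is governed by $\sup_{x\notin C_V(r)}(Wv)(x)^{-1}$-type quantities that tend to $0$ as $r\to\infty$ precisely because $V,W$ have compact sublevel sets and $\clH(V)\le-\delta W$ forces $W/v\to\infty$ along any escaping sequence. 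Assembling these pieces gives, for every $\epsy>0$, a finite-rank probabilistic kernel $T$ supported on a compact set with $\lll R-T\lll_v\le\epsy$, i.e. $v$-separability — possibly after enlarging $v$ to absorb the truncation weights, which is why the theorem allows a different $v$ in (ii). This is the route taken in the cited \Proposition{i-ii}.

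The main obstacle is the truncation estimate in the forward direction: one must show that cutting the process off outside a large compact set changes the resolvent by an arbitrarily small amount \emph{in the weighted operator norm} $\lll\cdot\lll_v$, not merely in total variation or pointwise. The difficulty is that $\lll\cdot\lll_v$ is sensitive to the behaviour of $Rh$ far out in the state space, where $h$ may be as large as $v(x)=e^{V(x)}$; the saving grace, and the technical heart of the argument, is that (DV3) yields $\clH(V)\le-\delta W$ with $W$ having compact sublevel sets, which — via a second application of Dynkin's formula to the product $Wv$, or equivalently to $e^{V}$ against the drift — bounds the contribution of the excursions outside $C_V(r)$ by something proportional to $1/\inf_{x\notin C_V(r)}W(x)\to0$. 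Making this rigorous requires care in choosing the order of quantifiers ($\epsy$ first, then $r$, then the partition $\{C_i\}$ and the measures $\{\nu_i\}$) and in verifying that the resulting $T$ is genuinely probabilistic after renormalization; everything else is bookkeeping with the minorization and the resolvent identity.
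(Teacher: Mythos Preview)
Your outline is largely on track, and you have correctly identified the truncation estimate as the heart of (i)$\Rightarrow$(ii). But there is one concrete misstep in your plan for that direction: the minorization $R\ge s\otimes\nu$ is \emph{not} what produces the finite-rank approximation, and the paper does not use it here at all. A single minorization peels off one rank-one piece and leaves a sub-stochastic remainder $R-s\otimes\nu$ that is still nowhere near finite-rank in $\lll\cdot\lll_v$; iterating it (Nummelin splitting) yields ergodicity, not operator-norm smallness. So the sentence ``everything else is bookkeeping with the minorization and the resolvent identity'' is where your plan would stall.

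What the paper actually does after the truncation step is use the \emph{continuous density} part of (A2) directly. Once one has reduced to $I_{C_r}RI_{C_r}$ on a compact $C_r$, the kernel has a continuous density $r(\cdot,\cdot)$ on $C_r\times C_r$, and one approximates $r$ uniformly there by simple functions, producing $T(x,dy)=\sum_{i,j}\theta_{ij}\ind_{C_i}(x)\ind_{C_j}(y)\,dy$. The $\lll\cdot\lll_v$-error is then controlled by the uniform bound on $|r-t|$ times $\muLeb(C_r)$ times $\sup_{C_r}v$, all finite. This is why the hypothesis (A2) is a density condition rather than merely a minorization condition.

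Your truncation heuristic is essentially right. The paper packages it via an auxiliary weight $W_0$ with $W_0^2/W\to 0$ at infinity and $v_0=W_0v$: from (DV3) one gets $R I_{1+\delta W}v\le v+bR(\ind_C v)$, hence $\lll R I_W\lll_v<\infty$, and then $\lll I_{W_0}(R-I_{C_r}RI_{C_r})I_{W_0}\lll_{v_0}\to 0$ as $r\to\infty$. Taking $W_0$ constant at the end gives the bound in $\lll\cdot\lll_v$, so in fact no enlargement of $v$ is needed for (i)$\Rightarrow$(ii); the ``possibly different $v$'' clause matters only in (ii)$\Rightarrow$(i), where the Lyapunov function built is $V_-=\log(Ru_-)$ with $u_-=(1+\sum_n\ind_{\ystate_n^c})v$, not $\log v$.
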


The following result follows immediately 
from  \Theorem{Rapprox} and \Proposition{GeneratorResolveMain}, combined with \cite[Theorem 3.5]{kontoyiannis-meyn:II}.
Recall that the \textit{spectrum} $\clS(K)\subset \Co$ of a linear operator 
$K$ on $\Lv$  is the set of $z\in\Co$  such that the inverse 
$[Iz - K]^{-1}$  does not exist as a bounded linear operator 
on $ \Lv $.   
\begin{theorem}
\label{t:specDis}
{\sc [(DV3) $\Rightarrow$ Discrete Spectrum]}
Let $\bfPhi$ be a Markov process satisfying
conditions {\em (A1)} and {\em (A2)}. If
$\bfPhi$ also satisfies the
drift condition {\em (DV3)} in the form given
in {\em (A4)}, then the spectrum of the resolvent kernel 
is discrete in $\Lv$.
\end{theorem}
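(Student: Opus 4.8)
The plan is to deduce the result from \Theorem{Rapprox} together with a standard fact about compact perturbations of the identity: if a bounded operator on a Banach space can be approximated in operator norm by finite-rank operators, then it is compact, and a compact operator has discrete spectrum (in the sense that its spectrum consists of $0$ together with at most countably many eigenvalues of finite multiplicity, accumulating only at $0$). First I would invoke \Theorem{Rapprox}: under (A1), (A2) and (DV3) in the form (A4), the resolvent kernel $R$ is $v$-separable for some continuous $v$ with compact sublevel sets. By the definition of $v$-separability, for every $\epsy>0$ there is a finite-rank kernel $T$ with $\lll R - T\lll_v\le\epsy$; since each such $T$ induces a finite-rank (hence compact) bounded operator on $\Lv$, and the compact operators form a closed subspace of the bounded operators in operator norm, $R$ itself induces a compact operator on $\Lv$.

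Next I would apply the Riesz--Schauder theory of compact operators on a Banach space: the spectrum $\clS(R)$ is a countable set whose only possible accumulation point is $0$, and every nonzero element of $\clS(R)$ is an eigenvalue of finite algebraic multiplicity. In particular $\clS(R)$ is discrete away from $0$, which is exactly the assertion that the spectrum of the resolvent kernel is discrete in $\Lv$. One should be slightly careful about the role of the weight: $v$-separability is stated with respect to a function $v$ that may differ from the Lyapunov function in (DV3), so the space $\Lv$ in the conclusion is the one attached to this $v$; I would state the corollary in those terms, noting that \Proposition{GeneratorResolveMain} (cited in the excerpt as giving $\lll R\lll_v<\infty$) guarantees $R$ is a bounded operator on this space to begin with, so that talking about its spectrum is legitimate.

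The only mildly delicate point — and the place I would be most careful — is the passage from ``compact resolvent'' to a statement phrased directly about $R$ as ``the spectrum is discrete''; this is purely a matter of quoting Riesz--Schauder correctly on a general (not necessarily reflexive or separable) Banach space, which is classical, so there is no real obstacle. If one wanted a statement about the generator $\clD$ rather than the resolvent, one would use the resolvent identity / spectral mapping $\clS(R_\alpha)\setminus\{0\} = \{(\alpha - z)^{-1} : z\in\clS(\clD)\}$ to transfer discreteness of $\clS(R_\alpha)$ to discreteness of $\clS(\clD)$; but since the theorem as stated only asserts discreteness of the spectrum of the resolvent kernel, the compactness argument above suffices and the proof is essentially a one-line corollary once \Theorem{Rapprox} is in hand.
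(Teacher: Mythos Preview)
Your proposal is correct and matches the paper's approach: the paper simply states the theorem as ``an immediate corollary to \Theorem{Rapprox} and \Proposition{GeneratorResolveMain}'' without further argument, and your Riesz--Schauder reasoning from $v$-separability ($\Rightarrow$ norm-limit of finite-rank operators $\Rightarrow$ compact $\Rightarrow$ discrete spectrum) is exactly what is implicitly intended. One small correction: the boundedness $\lll R\lll_v<\infty$ is not established in \Proposition{GeneratorResolveMain} but rather in \Lemma{DV4rlx} (via the drift bound \eqref{DV4rlx}); otherwise your write-up is accurate.
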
 

\subsection{Extensions}
\label{s:extend}

Further connections between (DV3), $v$-separability,
multiplicative mean ergodic theorems, and large
deviations for continuous-time Markov processes
will be considered in subsequent work, generalizing
and extending the discrete-time results of
\cite{kontoyiannis-meyn:II}.
In particular, under (DV3), the process
$\bfPhi$ is ``multiplicatively regular'' and
satisfies strong versions of the
``multiplicative mean ergodic theorem.''
These results, in turn, can be used to deduce
a large deviations principle for the empirical
measures induced by $\bfPhi$. Moreover,
the rate function can be expressed in terms
of the entropy 
rate,  
as in 
\cite{donsker-varadhan:I-II,dembo-zeitouni:book,kontoyiannis-meyn:II}.

The technical arguments used
in the proofs of all the central results here
can easily be extended beyond the class of
continuous-sample-path diffusions in $\RL^d$.
Although such extensions will not be pursued
further in this paper, we note that the assumption
(A1) can be replaced by the condition that
$\bfPhi$ is a nonexplosive Borel right process
(so that it satisfies the strong
Markov property and has right-continuous
sample paths) on a Polish space $\state$.
Assumptions (A2) and (A3) can be maintained as stated;
the conclusions of  \Proposition{hypoB} continue to hold in 
this more general setting.
Assumption (A4) can also be maintained without modification.     
The resolvent equations  in \Proposition{ResolveDR} hold in this general 
setting,  
which is what is required in the converse theory 
that provides the implication (ii) $\Longrightarrow$ (i) 
in \Theorem{HMMapprox}.

Finally, there are applications to consider, as well as bridges to other areas such as statistics, machine learning, and operations research \cite{busvlisch12,denmehmey11}.
The approximation introduced in this paper is similar to the approximation performed in the modeling technique known as \textit{probabilistic latent semantic analysis} (PLSA); see \cite{hof01} for the basic concepts,  and \cite{gaugou05,sharajsma08} for surveys that describe connections with techniques from other fields.  Given a large $m\times m$ matrix $P$ representing associations between different objects,  the goal is to find an approximating matrix $T$,  an  $m\times r$ matrix $S$, and an $r\times m$ matrix $N$ such that $r\ll m$ and,  
\[
T = SN = \sum_{i=1}^r s_i n_i^\transpose,
\]
where $\{s_i:1\le i\le r\}$ denote the columns of $S$,  
and $\{n_i^ \transpose :1\le i\le r\}$ denote the rows of $N$.  Hence, 
the goal is to find a transition matrix of reduced rank, exactly as in 
this paper.   Our work provides motivation and rigorous justification
for the use PLSA models, even when the state space is general, and even 
for Markov models evolving in continuous time, as well as motivation for 
the development of approximation theory for diffusions based on observed 
trajectories of the process.

\bigskip

The remainder of the paper is organized as follows.   
The following section develops results establishing 
approximations between the process $\bfPhi$ and 
a simple jump process.  This is a foundation for Section~\ref{finite} 
that establishes similar approximations with an HMM. 


\section{Resolvents and Jump-Process Approximations} 
\label{ResolveApprox}

We begin in this section with an approximation of the 
process $\bfPhi$ by a pure jump-process denoted $\bfPhi^\kappa$,  
evolving on the 
state space $\state$.    The fixed constant  $\kappa>0$ denotes 
the jump rate.   The jump times $\{\tau_i : i\ge 0\}$ define a Poisson process:  
\archive{Reviewer doesn't know what undelayed means, and I don't want to define it}
$\tau_0=0$, and the increments are i.i.d.\ with exponential distribution and mean $\kappa^{-1}$.  
At the time of the $i$th jump we have $\Phi^\kappa(\tau_i)\sim \kappa R_\kappa(x,\varble)$, given 
that $\Phi^\kappa(\tau_{i-1}) =x$.     This process is Markov, 
with generator,
\be
\clD_\kappa \eqdef\kappa[- I +\kappa R_\kappa ].
\label{eq:Dkappa}
\ee
This is the generator for the Markov process used in the proof 
of the Hille-Yosida theorem in \cite{rogers-williams:I}.

Throughout this section it is assumed that $\lll R_\kappa\lll_v<\infty $, 
with $v$ being continuous, 
with compact sublevel sets.  Hence the generator $\clD_\kappa$ 
also has finite norm.   
This is justified by the following proposition, whose proof may be found in the Appendix.  The following drift condition is a relaxation of (DV3),
\begin{equation}
\clD v\le -v +b_v,
\label{DV4rlx}
\end{equation} 
where $b_v$ is a finite constant, and $v\colon\state\to[1,\infty)$.

\begin{proposition}
\label{t:DV4rlx}
Let $\bfPhi$ be a Markov process satisfying~{\em (A1)}.
\begin{romannum}
\item If   {\em (A4)} holds, then 
there is a function 
$v\colon\state\to[1,\infty)$ and 
a finite constant $b_v$ satisfying  \eqref{DV4rlx}.

\item
If \eqref{DV4rlx} holds for a function 
$v\colon\state\to[1,\infty)$ and 
a positive constant $b_v$, then
the following bounds hold,
\[
\begin{aligned}
\lll (\alpha R_\alpha)^n\lll_v & \le 1+b_v,\qquad 
	\hbox{\it for all}\;n\ge 1,\  \alpha >0;
\\
\pi(v)&\le b_v,\qquad 
\qquad \hbox{\it for any invariant probability measure $\pi$.}
\end{aligned}
\]
where $b_v$ is the constant in  \eqref{DV4rlx}.
\end{romannum}
\qed
\end{proposition}

We next review some background on $\psi$-irreducible Markov processes.  

\subsection{Densities, irreducibility and ergodicity}
\label{densities}


The density condition (A2) combined with the existence 
of a Lyapunov function as in (DV3) implies ergodicity.
Recall that a Markov process $\bfPhi$ with a unique invariant
probability measure $\pi$ is called $v$-uniformly
ergodic for some function $v:\state\to\RL$, if
there are constants $\beta_0>0$, $B_0<\infty$, such that,
\[
\lll P^t - \One\otimes\pi\lll_v \le e^{B_0 -\beta_0 t},\qquad t\geq 0.
\]
See \cite{meytwe93a} for basic theory of $\psi$-irreducible Markov processes, including definitions of small sets and aperiodicity in this general state-space setting.
\archive{minor comment (11):   Page 8, Proposition 2.2: give reference for definition of -irreducibility and aperiodicity. Also, smallness of a set is not defined or referenced up to this point, neither is
v-uniform ergodicity.
\\
I don't understand his last complaint about v-uni.  }
 
\begin{proposition}
\label{t:hypoB}
If conditions~{\em (A1), (A2)} and~{\em (A3)} hold, then   
the Markov process $\bfPhi$  
is $\psi$-irreducible and aperiodic 
with $\psi(\varble)\eqdef R (x_0,\varble)$,
and all compact sets are small.
If, in addition, {\em (DV3)} holds, 
then the process is $v$-uniformly ergodic 
with $v=e^V$.
\end{proposition}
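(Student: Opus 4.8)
The plan is to treat the two assertions separately. For the first one --- $\psi$-irreducibility, aperiodicity, and smallness of compact sets --- I would start by observing that (A2) makes $\bfPhi$ a $T$-process: the resolvent $R$ dominates the kernel $K(x,A)=\int_A r(x,y)\,dy$, and by continuity of $r$ together with dominated convergence this $K$ supplies the non-trivial continuous component required for the $T$-process property. With the reachability hypothesis (A3) in force, the results of \cite{meytwe93a} --- in particular their Theorem~3.3, and see also \cite{meytwe93b} --- then yield that $\bfPhi$ is $\psi$-irreducible and aperiodic with $\psi(\,\cdot\,)=R(x_0,\,\cdot\,)$, and that every compact subset of $\state$ is petite. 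Since $\bfPhi$ is $\psi$-irreducible and aperiodic, petite sets and small sets coincide, so every compact set is small; this is the first assertion. The only step needing a line of checking here is that the continuous density of (A2) does give the $T$-process property, which is routine.

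For the second assertion, assume in addition (DV3) in the form (A4) and set $v=e^{V}$; by (A4), $v$ is continuous with compact sublevel sets, so $v\ge1$, $v$ is unbounded off compact sets, and $\bar v_C:=\sup_{x\in C}v(x)<\infty$ for the compact set $C$ appearing in (DV3). The definition of Fleming's nonlinear generator in \eqref{fle78a} gives $\clD v=\clD e^{V}=e^{V}\clH(V)$ (note that (DV3) already presupposes $e^{V}$ to lie in the domain of $\clD$, so this is meaningful); multiplying the inequality $\clH(V)\le-\delta W+b\ind_C$ by $e^{V}>0$ and using $W\ge1$ then gives
\[
\clD v \;\le\; -\delta\,Wv + b\,v\,\ind_C \;\le\; -\delta\,v + b\,\bar v_C\,\ind_C .
\]
This is a geometric Foster--Lyapunov drift inequality for the extended generator $\clD$, with Lyapunov function $v$ and with $C$ small by the first part. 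Combined with $\psi$-irreducibility and aperiodicity, the continuous-time geometric ergodic theory (e.g.\ \cite{dowmeytwe95a,meytwe93b} and \cite{meyn-tweedie:book2}) then implies that $\bfPhi$ is positive Harris recurrent with a unique invariant probability measure $\pi$ satisfying $\pi(v)<\infty$, and that there are constants $b_0>0$ and $B_0<\infty$ with $\lll P^t-\One\otimes\pi\lll_v\le e^{B_0-b_0 t}$ for all $t\ge0$; that is, $\bfPhi$ is $v$-uniformly ergodic with $v=e^{V}$.

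The main obstacle is not an analytic estimate but the bookkeeping of making each invoked result fit the present hypotheses: verifying that $C$ is petite/small in exactly the sense required by the cited continuous-time geometric ergodicity theorem (this is what the first part is for), that the displayed drift inequality is legitimate for the paper's notion of extended generator (guaranteed, as noted, because $e^{V}$ lies in the domain of $\clD$), and that the resolvent-chain conclusions of \cite{meytwe93a} transfer to statements about the continuous-time semigroup $\{P^t\}$. All of these are standard, so once the one-line computation $\clD v\le-\delta v+b\bar v_C\ind_C$ is in hand the proof is essentially an assembly of known facts.
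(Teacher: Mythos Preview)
Your proposal is correct and follows essentially the same route as the paper: use (A2) to obtain the $T$-process property, combine with (A3) to get $\psi$-irreducibility and aperiodicity, deduce that compact sets are petite and hence small, and then derive the drift inequality $\clD v\le -\delta v+b'\ind_C$ from (DV3) (exactly as in \Lemma{relaxed}) and invoke \cite{dowmeytwe95a}. The paper's proof is the same, only citing \cite[Theorem~4.1]{meytwe93a} for petiteness of compact sets and \cite[Theorem~5.5.7]{meyn-tweedie:book2} for ``petite $=$ small'' under aperiodicity, where you leave the references slightly less specific.
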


\begin{proof}
Under (A1) and (A2) the Markov process is a 
T-process, since $R$ has the strong Feller property  \cite{meytwe93a}.  This combined with (A3) 
easily implies $\psi$-irreducibility   
with $\psi(\varble) = R (x_0,\varble)$.   Under (A3), for any 
set $A$ satisfying $\psi(A)>0$,  we have 
$P^t(x,A)>0$ for all $t\ge 0$ sufficiently large.   The proof is 
similar to the proof of Proposition~6.1 of  
\cite{meytwe93a}.    Hence the process is aperiodic.   
To see that all compact sets are small, we note that all 
compact sets are petite by \cite[Theorem~4.1]{meytwe93a}.   
Under aperiodicity,  petite sets are small; this is proved 
as in the discrete-time case \cite[Theorem 5.5.7]{meyn-tweedie:book2}.

To see that $\bfPhi$ is $v$-uniformly ergodic note that,
under (DV3), we have,  
\[
\clD v\le -\delta v +b_v^0\ind_C\, ,
\]
where $b_v^0= b \sup_{x\in C} v(x)$. 
This is condition (V4) of
\cite{dowmeytwe95a},
and hence the
conclusion follows from the main result of
\cite{dowmeytwe95a}.
\end{proof} 

Ergodic theory based on drift conditions such as (V4) is based 
in part on the following \textit{Comparison theorem};
see \cite{meyn-tweedie:book2} for the discrete-time counterpart.

\begin{proposition}
\label{t:CT}
If $\clD h \le - f + g$ for  nonnegative functions $(h,f,g)$,
and if $h$ is continuous, then
\begin{romannum}
\item 
For any $T>0$,
\begin{equation}
\Expect_x\Bigl[ h(\Phi(T)) + \int_0^T f(\Phi(t))\, dt\Bigr] 
\le h(x) + \Expect_x\Bigl[   \int_0^T g(\Phi(t))\, dt\Bigr].
\label{e:CT}
\end{equation}

\item
For any $\alpha>0$,
\[
\alpha R_\alpha h +  R_\alpha f  \le   h   +  R_\alpha g.
\]
\end{romannum}
\end{proposition}

\begin{proof} 
The proof of (i) is precisely the same as in the proof of the 
comparison theorem in discrete time 
\cite{meyn-tweedie:book2}.   Part (ii) follows from (i) on multiplying each side of \eqref{e:CT}
by $\alpha e^{-\alpha T}$, and integrating over $T\in\Re$.
\end{proof}

\subsection{Resolvent equations}

Recall the construction of the process $\bfPhi^\kappa$
with generator $\clD_\kappa$ as in (\ref{eq:Dkappa}).
We denote the semigroup of $\bfPhi^\kappa$ by
$P^t_\kappa := e^{t\clD_\kappa}$, $t\geq 0$, and 
its associated family of 
resolvent kernels by $R_{\kappa,\, \alpha}$:
\be
R_{\kappa,\alpha} \eqdef   \int_0^\infty  e^{-\alpha t} P_\kappa^t\, dt,
\;\;\;\;\alpha>0.
\label{eq:Rkappaalpha}
\ee 
\Proposition{ResolveDR} states the resolvent equations, 
and establishes some simple corollaries.   

\begin{proposition}
\label{t:ResolveDR}
Suppose the process $\bfPhi$
satisfies~{\em (A1)} and $\bfPhi^\kappa$
is the jump process with generator $\clD_\kappa$ as
in {\em (\ref{eq:Dkappa})}. 
Then, for any positive
constants $\alpha,\beta$ we have:
\begin{romannum}
\item 
The resolvent equation holds,
\begin{equation}
R_\alpha   =  R_\beta  +  (\beta-\alpha) R_\beta R_\alpha 
=  R_\beta  +  (\beta-\alpha)  R_\alpha R_\beta.
\label{REkun-5}
\end{equation}
\item 
For each $\alpha>0$ and any measurable function   
$h\colon\state\to\Re$ for which  $R_\alpha |h| $ is finite-valued,   
the function $f=R_\alpha h$ is in the domain of $\clD$,
and,
\begin{equation}
\clD R_\alpha h  = \alpha R_\alpha h   - h  .
\label{ResolveDR}
\end{equation}
Moreover, with $g= \alpha R_\alpha h   - h $ the stochastic process \eqref{e:extgenMart}
 is a martingale, so that \eqref{e:extgenMartA} holds.

\item
The resolvent of $\bfPhi^\kappa$ satisfies 
the analogous identity, 
\begin{equation}
\clD_\kappa R_{\kappa,\, \alpha} h  = \alpha R_{\kappa,\, \alpha} h   - h  ,
		 \qquad \text{if   $R_{\kappa,\, \alpha} |h|$ is finite valued.}
\label{RkappaGenerate}
\end{equation}

\item
The generators for $\bfPhi$ and $\bfPhi^\kappa$  are related by,
\begin{equation}
\clD_\kappa h = \clD [\kappa R_\kappa] h    \qquad \text{if $R_\kappa |  h|$ is finite valued};
\label{clDclDkappa}
\end{equation} 
\end{romannum}
\end{proposition}

\begin{proof}
Part (i) is the usual resolvent equation \cite{ethkur86}.
Part (iii) follow directly from (ii), 
and (iv) follows from (i) and (ii). 

It remains to prove the resolvent equation \eqref{ResolveDR} in the strong form:   \eqref{e:extgenMartA} holds with  $f=R_\alpha h$   and $g= \alpha R_\alpha h   - h $.   We have by Fubini's theorem,
\[
P^T f = \int_0^\infty e^{-\alpha t} P^{t+T}  h\, dt =  e^{\alpha T}\int_T^\infty e^{-\alpha t} P^{t}  h\, dt .
\]
Suppose first that $h$ is bounded.  It follows from Assumption~A2 then $P^t h$ is a continuous function of $t$.  
Hence $P^T f$ is $C^1$ with,
\[
\frac{d}{dT}P^T f  = \alpha e^{\alpha T}\int_T^\infty e^{-\alpha t} P^{t}  h\, dt  -  e^{\alpha T} P^T h
= P^T g . 
\]
The identity \eqref{e:extgenMartA} thus holds, by the fundamental theorem of calculus. 

If $h$ is not bounded we can construct a sequence of functions $\{h_n\}$ satisfying $|h_n(x)|\le \min(|h(x)|,n)$ for each $n$ and $x$,  and $h_n(x)\to h(x)$ as $n\to\infty $ for each $x$.   We then have for each $n$ and $t$,  with
$f_n=R_\alpha h_n$   and $g_n= \alpha R_\alpha h_n   - h_n $, 
\[
 P^t f_n     = f_n + \int_0^t P^s g_n\, ds  .
\]  
Under the assumption that $R_\alpha |h| $ is finite-valued,   
it follows that $ P^t |f|$ and $ \int_0^t P^s |g|\, ds$
are finite-valued.   The desired conclusion \eqref{e:extgenMartA}  
thus follows by dominated convergence. 
\end{proof}

The resolvent equation \eqref{ResolveDR} implies that $[\alpha I  - \clD]$ 
is a left inverse  of $R_\alpha$ for any $\alpha>0$, 
in the sense that $[\alpha I  - \clD]R_\alpha f =f$ 
for an appropriate class of functions $f$.
\archive{Reviewer wants us to be more precise.  I think this is precise enough - see the proposition. }
While $R_\alpha$ {\em cannot} be expressed
as a true operator inverse on the space
$L_\infty^v$, it is in fact possible to obtain 
such a representation for $R_{\kappa, \alpha}$.
This is made precise in the 
following.

\begin{lemma}
\label{t:Rkappa_alpha}
Suppose the process $\bfPhi$
satisfies {\em (A1)} and the drift
condition \eqref{DV4rlx}.
Then, for any $\alpha>0$,
 \begin{equation}
R_{\kappa,\, \alpha}   = 
[\alpha I - \clD_\kappa]^{-1} =  
[\alpha I - \kappa(\kappa R_\kappa -I)  ]^{-1} =  
 \frac{\kappa}{(\kappa+\alpha)^2} 
\sum_{n=-1}^\infty (1+\alpha\kappa^{-1})^{-n} ( \kappa R_\kappa)^{n+1},
\label{RRapproxB}
\end{equation}
where the sum converges in $\Lv$. Moreover,
\begin{equation}
\lll   \alpha R_{\kappa,\, \alpha}   \lll_v \le 1+b_v.
\label{RRapproxBbdd}
\end{equation}
\end{lemma}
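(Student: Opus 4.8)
The plan is to invert $\alpha I-\clD_\kappa$ explicitly as a Neumann-type series, using the uniform power bound on $\kappa R_\kappa$ supplied by \Lemma{DV4rlx} in place of a contraction estimate (which is not available in the weighted norm). Writing $\clD_\kappa=\kappa(\kappa R_\kappa-I)$ we have
\[
\alpha I-\clD_\kappa=(\alpha+\kappa)I-\kappa^2R_\kappa=(\alpha+\kappa)\,[\,I-A\,],
\qquad A:=\tfrac{\kappa}{\alpha+\kappa}\,(\kappa R_\kappa),
\]
and since \eqref{DV4rlx} holds, \Lemma{DV4rlx} (applied with its resolvent parameter equal to $\kappa$) gives $\lll(\kappa R_\kappa)^m\lll_v\le 1+b'$ for every $m\ge1$, while $\lll I\lll_v=1$. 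As $\kappa/(\alpha+\kappa)<1$, this yields $\lll A^m\lll_v\le(\kappa/(\alpha+\kappa))^m(1+b')$ for $m\ge1$, and hence $\sum_{m\ge0}\lll A^m\lll_v\le 1+(1+b')\kappa/\alpha<\infty$.

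Consequently $S:=\sum_{m\ge0}A^m$ converges absolutely in the Banach algebra of bounded operators on $\Lv$. From the telescoping identity $(I-A)\sum_{m=0}^{M}A^m=I-A^{M+1}$ and $\lll A^{M+1}\lll_v\to0$ I would conclude $(I-A)S=S(I-A)=I$, so $S=(I-A)^{-1}$ is a two-sided bounded inverse and $[\alpha I-\clD_\kappa]^{-1}=\tfrac1{\alpha+\kappa}S=\tfrac1{\alpha+\kappa}\sum_{m\ge0}\bigl(\tfrac{\kappa}{\alpha+\kappa}\bigr)^m(\kappa R_\kappa)^m$. Substituting $n=m-1$ and using $1+\alpha\kappa^{-1}=(\alpha+\kappa)/\kappa$ rewrites the right-hand side as $\tfrac{\kappa}{(\alpha+\kappa)^2}\sum_{n\ge-1}(1+\alpha\kappa^{-1})^{-n}(\kappa R_\kappa)^{n+1}$, which is exactly the expression in \eqref{RRapproxB}; the convergence ``in $\Lv$'' asserted there is the absolute operator-norm convergence just established (and a fortiori convergence of $\sum(\cdots)h$ in $\Lv$ for every $h\in\Lv$).

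It remains to identify this inverse with $R_{\kappa,\alpha}$ and to prove \eqref{RRapproxBbdd}. First, $\clD_\kappa=\kappa(\kappa R_\kappa-I)$ is everywhere defined and bounded on $\Lv$; and from $P^t_\kappa=e^{t\clD_\kappa}=e^{-\kappa t}\sum_{m\ge0}\tfrac{(\kappa t)^m}{m!}(\kappa R_\kappa)^m$ together with $\lll(\kappa R_\kappa)^m\lll_v\le 1+b'$ one gets $\lll P^t_\kappa\lll_v\le 1+b'$ for all $t\ge0$, whence, estimating $R_{\kappa,\alpha}$ pointwise, $\lll R_{\kappa,\alpha}\lll_v\le\int_0^\infty e^{-\alpha t}(1+b')\,dt=\alpha^{-1}(1+b')$ --- this is \eqref{RRapproxBbdd}, and in particular $R_{\kappa,\alpha}$ is a bounded operator on $\Lv$. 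Finally, \Proposition{ResolveDR}(iii) gives $[\alpha I-\clD_\kappa]R_{\kappa,\alpha}h=h$ for every $h\in\Lv$ (for such $h$, $R_\kappa|h|$ is finite-valued since $v$ is finite); composing on the left with the bounded inverse $[\alpha I-\clD_\kappa]^{-1}$ constructed above yields $R_{\kappa,\alpha}=[\alpha I-\clD_\kappa]^{-1}$, closing the chain of equalities in \eqref{RRapproxB}.

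The one genuine subtlety --- and essentially the only place where more than bookkeeping is required --- is the first estimate: in the weighted norm $\lll\kappa R_\kappa\lll_v$ is bounded only by $1+b'$, not by $1$, and $\kappa/(\alpha+\kappa)$ may be close to $1$, so $\lll A\lll_v\ge1$ is possible and the elementary Neumann series for a contraction does not apply directly. The resolution is to use the \emph{uniform}-in-$m$ bound $\lll(\kappa R_\kappa)^m\lll_v\le1+b'$ from \Lemma{DV4rlx}, which pins the spectral radius of $\kappa R_\kappa$ on $\Lv$ at $1$ and forces the geometric decay $\lll A^m\lll_v=O\bigl((\kappa/(\alpha+\kappa))^m\bigr)$; once that is in hand, convergence, the two-sided-inverse identity, and the re-indexing are all routine. (Alternatively, this spectral-radius remark lets one bypass \Proposition{ResolveDR}(iii) and instead argue directly that $\clD_\kappa-\alpha I$ has spectrum in the half-plane $\{\mathrm{Re}(z)\le-\alpha\}$, so that $R_{\kappa,\alpha}=\int_0^\infty e^{t(\clD_\kappa-\alpha I)}\,dt$ converges in operator norm to $[\alpha I-\clD_\kappa]^{-1}$.)
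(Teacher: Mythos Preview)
Your argument is correct and follows essentially the same route as the paper: invert $\alpha I-\clD_\kappa$ via a Neumann series, use the uniform-in-$m$ bound $\lll(\kappa R_\kappa)^m\lll_v\le 1+b'$ from \Lemma{DV4rlx} (rather than a contraction estimate) to secure convergence, and then identify the series with $R_{\kappa,\alpha}$ via the left-inverse relation from \Proposition{ResolveDR}(iii). The only minor deviation is in the proof of \eqref{RRapproxBbdd}: the paper sums the series termwise and simplifies, whereas you first bound $\lll P_\kappa^t\lll_v\le 1+b'$ via the exponential series and then integrate---both yield the same bound and are equally valid.
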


\begin{proof} 
For any $n\ge 0$,   $\kappa>0$,  we have the bound 
$\lll ( \kappa R_\kappa)^{n+1} \lll_v  \le 1+b_v $, 
from \Proposition{DV4rlx}~(ii).
The representation \eqref{RkappaGenerate} implies that the inverse 
can be expressed as the power series 
(\ref{RRapproxB}), which 
is convergent in $\Lv$.
Since 
$\alpha I - \clD_\kappa$ is a left inverse of 
$R_{\kappa,\, \alpha}$,  it then follows that  $R_{\kappa,\, \alpha}   = 
[\alpha I - \clD_\kappa]^{-1} $.
 
To establish the bound \eqref{RRapproxBbdd} we apply the triangle inequality,
\[
\lll R_{\kappa,\, \alpha}   \lll_v  \le 
 \frac{\kappa}{(\kappa+\alpha)^2} \sum_{n=-1}^\infty 
(1+\alpha\kappa^{-1})^{-n} \lll \kappa R_\kappa\lll_v^{n+1} .
\]
Using once more the bound $ \lll \kappa R_\kappa\lll_v^{n+1}  
\le 1+b_v $, 
and simplifying the expression for the 
sum in the following bound,
\[
\lll R_{\kappa,\, \alpha}   \lll_v  \le 
 (1+b_v)  
 \frac{\kappa}{(\kappa+\alpha)^2}
\left( \frac{1+\alpha\kappa^{-1} }
{ 1-  (1+\alpha\kappa^{-1})^{-1}  }    \right),
 \]
we obtain the bound in \eqref{RRapproxBbdd}, as claimed.
\end{proof}

\subsection{Resolvent approximations}

Under (DV3) or, more generally, under the weaker
drift condition \eqref{DV4rlx},
we obtain the following strong approximation 
for the resolvent kernels:

\begin{proposition}
\label{t:RRapprox}
Suppose the process $\bfPhi$ 
satisfies~{\em (A1)} and $\bfPhi^\kappa$
is the jump process with generator $\clD_\kappa$ 
defined in {\em (\ref{eq:Dkappa})}. 
If $\bfPhi$ satisfies the drift condition
\eqref{DV4rlx},
then, for each $\alpha < \kappa$:
\begin{equation*}
\lll R_{\kappa,\, \alpha} -R_\alpha\lll_v \le  
\frac{4}{\kappa} (1+b_v) .
\end{equation*}
\end{proposition}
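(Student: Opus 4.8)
The plan is to reduce $R_{\kappa,\alpha}$ to a single resolvent of $\bfPhi$, compare it with $R_\alpha$ through an exact algebraic identity, and control the three resulting pieces using the drift bound \eqref{RalphaDrift}; the delicate point will be that one of the three pieces must be estimated via positivity rather than submultiplicativity of $\lll\cdot\lll_v$, so that the constant stays \emph{linear} in $b'$.

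First I would resum the geometric series \eqref{RRapproxB} of \Lemma{Rkappa_alpha}: separating the $n=-1$ term gives $\frac{1}{\kappa+\alpha}I$, and resumming the tail $\sum_{n\ge 0}(1+\alpha\kappa^{-1})^{-n}(\kappa R_\kappa)^{n+1}$ with the resolvent equation \eqref{REkun-5} (equivalently, inverting the factorization $\alpha I-\clD_\kappa=\tfrac{\kappa+\alpha}{\kappa}[\kappa R_\kappa]\,[\gamma I-\clD]$ coming from \eqref{eq:Dkappa} and \eqref{clDclDkappa}) yields the closed form
\[
R_{\kappa,\alpha}=\frac{1}{\kappa+\alpha}\,I+\frac{\kappa-\gamma}{\kappa+\alpha}\,R_\gamma,
\qquad \gamma:=\frac{\alpha\kappa}{\kappa+\alpha}<\alpha,
\]
where $\kappa-\gamma=\kappa^2/(\kappa+\alpha)$ and $\alpha-\gamma=\alpha^2/(\kappa+\alpha)$. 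Writing $R_\gamma=R_\alpha+(R_\gamma-R_\alpha)$ and simplifying $\tfrac{\kappa-\gamma}{\kappa+\alpha}-1=-\tfrac{\alpha+\gamma}{\kappa+\alpha}$, this becomes the decomposition I would work with:
\[
R_{\kappa,\alpha}-R_\alpha
= \frac{1}{\kappa+\alpha}\,I
\;-\;\frac{\alpha+\gamma}{\kappa+\alpha}\,R_\alpha
\;+\;\frac{\kappa-\gamma}{\kappa+\alpha}\,\bigl(R_\gamma-R_\alpha\bigr).
\]

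Next I would bound the three terms in $\lll\cdot\lll_v$. The first is $\frac{1}{\kappa+\alpha}\le\frac1\kappa$. For the second, \Lemma{DV4rlx} gives $\lll R_\alpha\lll_v\le\alpha^{-1}(1+b')$, and $\frac{\alpha+\gamma}{\kappa+\alpha}=\frac{\alpha(2\kappa+\alpha)}{(\kappa+\alpha)^2}\le\frac{2\alpha}{\kappa}$, so this term is at most $\frac{2(1+b')}{\kappa}$. The third term is the heart of the argument. One must \emph{not} bound $\lll R_\gamma-R_\alpha\lll_v$ by $\lll R_\gamma\lll_v+\lll R_\alpha\lll_v$, nor $R_\gamma-R_\alpha=(\alpha-\gamma)R_\gamma R_\alpha$ by $|\alpha-\gamma|\,\lll R_\gamma\lll_v\,\lll R_\alpha\lll_v$, since both give a constant quadratic in $b'$. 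Instead, since $\gamma<\alpha$ the resolvent equation \eqref{REkun-5} shows $R_\gamma-R_\alpha$ is a \emph{positive} kernel, so $\lll R_\gamma-R_\alpha\lll_v=\|(R_\gamma-R_\alpha)v\|_v$. Applying the sharp drift estimate \eqref{RalphaDrift}, in the form $\beta R_\beta v\le\tfrac{1}{\beta+1}(\beta v+b')$, with $\beta=\alpha$ and then with $\beta=\gamma$, together with $R_\beta\mathbf 1=\beta^{-1}\mathbf 1$, gives
\[
(R_\gamma-R_\alpha)\,v=(\alpha-\gamma)\,R_\gamma R_\alpha v
\le (\alpha-\gamma)\Bigl[\frac{v}{(\alpha+1)(\gamma+1)}
+\frac{b'}{\gamma(\alpha+1)(\gamma+1)}
+\frac{b'}{\alpha\gamma(\alpha+1)}\Bigr].
\]
Using the identities $\alpha-\gamma=\tfrac{\alpha}{\kappa}\gamma$ and $\tfrac{\alpha-\gamma}{\alpha\gamma}=\tfrac1\kappa$ (and $\tfrac{\alpha}{\alpha+1}\le1$, $\gamma+1\ge1$), the three bracketed terms are bounded by $\tfrac1\kappa$, $\tfrac{b'}{\kappa}$, $\tfrac{b'}{\kappa}$ respectively, so $\lll R_\gamma-R_\alpha\lll_v\le\tfrac{1+2b'}{\kappa}$; and since $\tfrac{\kappa-\gamma}{\kappa+\alpha}\le1$, the third term is $\le\tfrac{1+2b'}{\kappa}$. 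Adding, $\lll R_{\kappa,\alpha}-R_\alpha\lll_v\le\tfrac1\kappa+\tfrac{2(1+b')}{\kappa}+\tfrac{1+2b'}{\kappa}=\tfrac{4(1+b')}{\kappa}$.

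The main obstacle is exactly the third term: the naive triangle/submultiplicative bound on $\lll R_\gamma-R_\alpha\lll_v$ loses a factor $1+b'$, and recovering the claimed linear dependence on $b'$ requires both the positivity of $R_\gamma-R_\alpha$ (so the norm is attained at $v$) and careful bookkeeping of how the small gap $\alpha-\gamma=\alpha^2/(\kappa+\alpha)$ cancels the $\gamma^{-1}$ and $(\alpha\gamma)^{-1}$ factors generated by the two applications of \eqref{RalphaDrift}. A secondary, routine point is to confirm the resummation of \eqref{RRapproxB} is legitimate in $\lll\cdot\lll_v$, which follows from the $\Lv$-convergence asserted in \Lemma{Rkappa_alpha} together with \eqref{REkun-5}.
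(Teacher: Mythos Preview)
Your argument is correct and arrives at exactly the stated constant, but it proceeds along a genuinely different line from the paper's proof.  The paper keeps both $R_\alpha$ and $R_{\kappa,\alpha}$ as power series in $\kappa R_\kappa$ (namely $R_\alpha=\kappa^{-1}\sum_{n\ge 0}(1-\alpha\kappa^{-1})^n(\kappa R_\kappa)^{n+1}$ from \eqref{REkun-5}, and \eqref{RRapproxB} for $R_{\kappa,\alpha}$), subtracts, and splits the result into three pieces according to whether the discrepancy comes from the prefactor, the geometric weights, or the extra $n=-1$ term.  Each piece is then bounded using only the uniform estimate $\lll(\kappa R_\kappa)^{n+1}\lll_v\le 1+b'$ from \Lemma{DV4rlx}, and the three bounds $\tfrac{2(1+b')}{\kappa+\alpha}$, $\tfrac{1+b'}{\kappa}$, $\tfrac{1}{\kappa+\alpha}$ add up to at most $4(1+b')/\kappa$.

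You instead resum \eqref{RRapproxB} to the closed form $R_{\kappa,\alpha}=\tfrac{1}{\kappa+\alpha}I+\tfrac{\kappa-\gamma}{\kappa+\alpha}R_\gamma$ with $\gamma=\alpha\kappa/(\kappa+\alpha)$, reducing everything to two ordinary resolvents $R_\alpha,R_\gamma$; the decisive observation, which has no counterpart in the paper's proof, is that $R_\gamma-R_\alpha=(\alpha-\gamma)R_\gamma R_\alpha$ is a \emph{positive} kernel, so its $\lll\cdot\lll_v$-norm equals $\|(R_\gamma-R_\alpha)v\|_v$, and two applications of \eqref{RalphaDrift} keep the constant linear in $b'$.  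The paper avoids this positivity/drift step entirely because the uniform-in-$n$ bound from \Lemma{DV4rlx} already encodes the needed linearity in $b'$.  Your route is arguably more transparent once the closed form is in hand, while the paper's series comparison is more mechanical and closer in spirit to the surrounding material (\Lemma{Rkappa_alpha} in particular).
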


\begin{proof}
We first obtain a power series representation for $R_\alpha   $ not in 
terms of its generator, but in terms of the resolvent kernel $  R_\kappa  $.
The resolvent equation \eqref{REkun-5} with $\beta=\kappa$ 
and $\alpha>0$ arbitrary  
gives $ [I-  (\kappa-\alpha) R_\kappa] R_\alpha   =  R_\kappa  $.
Since $0<\alpha < \kappa$ and $R_\kappa(x,\state)=\kappa^{-1}$ for 
each $x$,  it follows that $R_\alpha$ can be expressed as the power series,
\begin{equation*}
R_\alpha   =  [I-  (1-\alpha \kappa^{-1}) \kappa R_\kappa] ^{-1}R_\kappa  
		=  \frac{1}{\kappa} \sum_{n=0}^\infty 
		(1-\alpha\kappa^{-1})^n (\kappa R_\kappa)^{n+1}.
\end{equation*}
\Proposition{DV4rlx}~(ii) gives the uniform bound,
$\lll ( \kappa R_\kappa)^{n+1} \lll_v  \le 1+b_v $,  
which implies that this sum converges in $\Lv$.

Applying \Lemma{Rkappa_alpha}, we conclude that the 
difference of the two resolvent kernels 
$ R_{\kappa,\, \alpha} $ and $R_\alpha$
can be decomposed into three terms:
\begin{subequations}
\begin{eqnarray}
R_{\kappa,\, \alpha} -R_\alpha
&=&    
\Bigl( \frac{\kappa}{(\kappa+\alpha)^2} -\frac{1}{\kappa} \Bigr)
\sum_{n=0}^\infty (1+\alpha\kappa^{-1})^{-n} ( \kappa R_\kappa)^{n+1}
\label{RbkapRbA}
\\ 
&& { } +  \frac{1}{\kappa} \sum_{n=0}^\infty \Bigl( (1+\alpha\kappa^{-1})^{-n}-(1-\alpha\kappa^{-1})^n\Bigr) (\kappa R_\kappa)^{n+1}
\label{RbkapRbB}
\\ 
&& { } + 
\Bigl(
 \frac{\kappa}{(\kappa+\alpha)^2}    
(1+\alpha\kappa^{-1})^{-n}\Big|_{n=-1} \Bigr) I.
\label{RbkapRbC}
\end{eqnarray}
\end{subequations}
To bound the first term \eqref{RbkapRbA} we apply \Proposition{DV4rlx}~(ii):
\[
\begin{aligned}
\frac{1}{(1+b_v)}
\lll \hbox{RHS of \eqref{RbkapRbA}} \lll_v 
& \le 
	 \Bigl| \frac{\kappa}{(\kappa+\alpha)^2} 
	-\frac{1}{\kappa} \Bigr|
	\sum_{n=0}^\infty (1+\alpha\kappa^{-1})^{-n}
	\\
& =   
	\Bigl( \frac{1}{\kappa}-\frac{\kappa}{(\kappa+\alpha)^2}  \Bigr)
	\Bigl(1- (1+\alpha\kappa^{-1})^{-1} \Bigr)^{-1}
	\\
& =  \Bigl( \frac{2\kappa\alpha+\alpha^2}
	{\kappa (\kappa+\alpha)^2} \Bigr)
	\Bigl(  \frac{\alpha}{\kappa+\alpha} \Bigr)^{-1}
	\\
& =  \frac{\kappa +(\kappa   +\alpha)}{ \kappa (\kappa+\alpha)}  =  \frac{1}{  \kappa+\alpha}  +  \frac{1}{  \kappa}    
	\, .
\end{aligned}
\]
This implies the bound,
\[
\lll \hbox{RHS of \eqref{RbkapRbA}} \lll_v \le\frac{2}{\kappa} (1+b_v) .
\]

The next inequality also uses the bound 
$\lll (\kappa R_\kappa)^n\lll_v\le 1+b_v$: 
\[
\begin{aligned}
\lll \hbox{RHS of \eqref{RbkapRbB}} \lll_v & \le 
 (1+b_v) \frac{1}{\kappa} \sum_{n=0}^\infty \Bigl( (1+\alpha\kappa^{-1})^{-n}-(1-\alpha\kappa^{-1})^n\Bigr)  
\\
& =
 (1+b_v) \frac{1}{\kappa}  
	\Bigl([1- (1+\alpha\kappa^{-1})^{-1}]^{-1}-
	[1-(1-\alpha\kappa^{-1})]^{-1}\Bigr)  \\
& =
 \frac{1}{\kappa}  (1+b_v).
\end{aligned}
\]
The final term \eqref{RbkapRbC} is elementary: 
\[
\lll \hbox{RHS of \eqref{RbkapRbC}} \lll_v = 
 \frac{\kappa}{(\kappa+\alpha)^2}   
 (1+\alpha\kappa^{-1}) =  \frac{1}{\kappa+\alpha} .
\]
Substituting these three bounds completes the proof.
\end{proof}
 

\section{Separability} 
\label{finite}

In this section we develop consequences of the separability assumption.   
In particular, we describe the construction of an approximating 
semigroup $\{Q^t\}$ with generator of the form
given in (\ref{Kgenerate}),
as described in \Theorem{HMMapprox}.
This is accomplished in four steps:

\begin{romannum}
\item 
First we note that under
(DV3) the resolvent kernel $R$ of $\bfPhi$
can be truncated to a compact set.

\item 
Then we argue that, again on a compact set,
$R$ can be approximated by a 
finite-rank kernel $T$.

\item
We next prove that the generator,
$\clD_\kappa \eqdef\kappa[- I +\kappa R_\kappa ]$,
of the jump process $\bfPhi^\kappa$ constructed 
in Section~\ref{ResolveApprox}, can be approximated
by a generator $\clE$ of the form 
\eqref{Kgenerate},
\ben
 \clE = \kappa \Bigl[- I+
	 \IND_{C_0}\otimes\nu_1
	+ \sum_{i,j=1}^N  r_{ij} \,   
	\IND_{C_i}\otimes\nu_j \Bigr],
\een
as long as $\kappa>0$ is chosen 
sufficiently large. This key result
is described in Proposition~\ref{t:i-ii}.

\item
Finally we show that the transition semigroup $\{P^t\}$
of the original process $\bfPhi$ can be 
approximated by the semigroup $\{P^t_\kappa\}$
of the jump process $\bfPhi^\kappa$
(Proposition~\ref{t:HMMapproxPPkappa}),
and that the semigroup $\{P^t_\kappa\}$
can in turn be approximated by the semigroup
$\{Q^t\}$ corresponding to an HMM with
a generator $\clE$ as above
(Proposition~\ref{t:HMMapproxPPkappaB}).

\end{romannum}

Again, the starting point of these results is justified by applying (DV3) to obtain the truncation described in (i).  A converse is obtained in the following result.  The proof is based on the resolvent equations, and 
is found in the Appendix.

\begin{proposition}
\label{t:RapproxConverse}
Suppose that the Markov process
$\bfPhi$ satisfies conditions~{\em (A1)} and~{\em (A2)},
and that its resolvent kernel $R$ is $v$-separable 
for some continuous function $v\colon\state\to[1,\infty)$
with compact sublevel sets.
Then~{\em (A4)} holds for some continuous
$V_-, W_-$ on $\state$, and the function
$V_-$ is in $L_\infty^{V}$.
\qed
\end{proposition}

\subsection{Truncations and finite approximations}

Let $\bfPhi$ be a Markov process satisfying
condition~(A1), with generator $\clD$
and associated resolvent kernels $\{R_\alpha\}$.
Recall the definition of the corresponding 
jump process $\bfPhi^\kappa$ in the beginning 
of Section~\ref{ResolveApprox}, 
with generator $\clD_\kappa$ and associated
resolvents $\{R_{\kappa,\alpha}\}$.

Our result here shows that 
condition (DV3) implies that the 
generator $\clD_\kappa$ of the jump
process $\bfPhi^\kappa$
can be approximated by a generator
$\clE$ as in (\ref{Kgenerate}).
This result is a corollary of \Proposition{finiteRank}, whose proof is given in the Appendix.
\archive{minor comment (16) Page 14, Proposition 3.1: There is no mentioning where a proof can be found. If you
want to keep a proof in Appendix, please say that the proof of Proposition 3.1 is
delayed until Appendix A.}
\begin{proposition}
\label{t:i-ii}
Suppose the Markov process $\bfPhi$ satisfies
conditions~{\em (A1)}, {\em (A2)}. If~{\em (DV3)} holds
as in assumption~{\em (A4)}, 
then, for each $\kappa>0$ and any $\epsilon>0$,
there exists a generator $\clE$ of the form given 
in \eqref{Kgenerate}, such that all the $r_{ij}$
are strictly positive,
and the generator
$\clD_\kappa$ of the jump process $\bfPhi^\kappa$
can be approximated in operator norm as,
\be
\lll     \clD_\kappa  -  \clE  \lll_v \le \epsy,\qquad  
\label{eq:nonUnif}
\ee
with $v:=e^{V}$.
\qed
\end{proposition}

From \Proposition{i-ii} we have a generator
$\clE$ of the form (\ref{Kgenerate}),
and with $Q^t:=e^{t\clE}$, $t>0$, being
the associated transition semigroup,
the corresponding resolvent kernels 
$\{T_\alpha\}$ are defined, as usual,
in (\ref{KQresolve}).
Using the approximation 
of the generator $\clE$
in (\ref{eq:nonUnif}), 
we next show that the 
kernels $\{T_\alpha\}$ 
can be expressed as operator
inverses, in a way analogous 
to the representations obtained
in \Lemma{Rkappa_alpha}
for the resolvents $\{R_{\kappa,\alpha}\}$.

\begin{lemma}
\label{t:newLemma}
Suppose that the assumptions of \Proposition{i-ii}
hold, and choose $\kappa>0$ and $\epsilon_0>0$
such that $\lll\clD_\kappa-\clE\lll_v\leq \epsilon_0$.
Then the resolvent obtained from the semigroup $\{Q^t\}$ 
can be expressed as an inverse operator on $\Lv$: 
For all $\alpha>(1+b_v)\epsilon_0$,
\begin{equation*}
T_\alpha = [\alpha I  -\clE]^{-1},
\end{equation*}
where $b_v$ is as in \Proposition{DV4rlx}~(i).
Moreover, for all such $\alpha$ we 
have the norm bound,
\begin{equation}
\lll T_\alpha \lll_v \le  \frac{1+b_v}{\alpha - (1+b_v) \epsy_0}. 
 \label{e:EkappaInvBdd}
\end{equation}
\end{lemma}

\begin{proof}
Note that we already have from the resolvent equation 
the formula $ [\alpha I  -\clE] T_\alpha = I$ on $\Lv$.   
It remains to show that  $ [\alpha I  -\clE] $ admits an inverse.  
We can write, on some domain,
\[
[\alpha I  -\clE]^{-1} = 
[\alpha I  - \clD_\kappa +\clD_\kappa  - \clE]^{-1} 
= 
R_{\kappa,\, \alpha} [ I + ( \clD_\kappa  - \clE) R_{\kappa,\, \alpha}  ]^{-1}. 
\]
The right-hand-side admits a power series representation whenever 
$\lll ( \clD_\kappa  - \clE) R_{\kappa,\, \alpha}  \lll_v<1$.
In fact, under the assumptions of the Lemma,
using the bound in \Lemma{Rkappa_alpha} we have, 
$$
\lll ( \clD_\kappa  - \clE) R_{\kappa,\, \alpha}  \lll_v
\leq
\lll \clD_\kappa  - \clE \lll_v\cdot
\lll R_{\kappa,\, \alpha}  \lll_v
\leq \epsilon_0(1+b_v)/\alpha<1,$$
and 
the resulting bound is precisely  \eqref{e:EkappaInvBdd}.
\end{proof}

Our next result shows that $v$-separability implies 
that each of the resolvent kernels $R_\alpha$
can be approximated by the kernels
$\{T_\alpha\}$ obtained from a finite-rank semigroup.
Specifically, $R_\alpha$ will be approximated
by a resolvent $T_\alpha$ of the form
\eqref{KQresolve}, where the transition
semigroup $\{Q^t\}$ is that of a Markov
process with generator $\clE$ as in (\ref{Kgenerate}).

\begin{proposition}
\label{t:GeneratorResolveMain}
Under the assumptions of \Proposition{i-ii},
for each $\epsilon>0$ and $\delta\in(0,1)$,
there exists a generator $\clE$ of the form 
given in \eqref{Kgenerate}, such that the 
corresponding resolvent kernels $\{T_\alpha\}$ 
defined in \eqref{KQresolve} satisfy the
following uniform bound:
\ben
\lll R_\alpha-T_\alpha\lll_v\leq\epsilon,
\;\;\;\;\;\;\delta\leq\alpha\leq\delta^{-1}.
\een
\end{proposition}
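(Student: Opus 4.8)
\textbf{Proof plan for \Proposition{GeneratorResolveMain}.}
The plan is to combine the three approximation results that have already been assembled: the jump-process resolvent approximation of \Proposition{RRapprox}, the generator approximation of \Proposition{i-ii}, and the inverse-operator representations of \Lemma{Rkappa_alpha} and \Lemma{newLemma}. The triangle inequality gives
\[
\lll R_\alpha - T_\alpha \lll_v \le \lll R_\alpha - R_{\kappa,\,\alpha}\lll_v + \lll R_{\kappa,\,\alpha} - T_\alpha\lll_v ,
\]
so it suffices to make each of the two terms on the right at most $\epsilon/2$, uniformly over $\alpha\in[\delta,\delta^{-1}]$.

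For the first term, \Lemma{relaxed} shows that (DV3) implies the relaxed drift condition \eqref{DV4rlx} with $b'=b_v$, so \Proposition{RRapprox} applies and gives $\lll R_\alpha - R_{\kappa,\,\alpha}\lll_v \le (4/\kappa)(1+b_v)$ for all $\alpha\le\kappa$. Hence, choosing $\kappa$ large enough that $\kappa\ge\delta^{-1}$ and $(4/\kappa)(1+b_v)\le\epsilon/2$ handles the first term uniformly on $[\delta,\delta^{-1}]$. This fixes the value of $\kappa$ once and for all.

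For the second term, I would first invoke \Proposition{i-ii} with this fixed $\kappa$ and a second small parameter $\epsilon_0>0$ (to be chosen below) to obtain a generator $\clE$ of the form \eqref{Kgenerate} with $\lll\clD_\kappa-\clE\lll_v\le\epsilon_0$. Provided $\epsilon_0$ is small enough that $\alpha>(1+b_v)\epsilon_0$ for all $\alpha\ge\delta$ --- i.e.\ $\epsilon_0<\delta/(1+b_v)$ --- \Lemma{newLemma} gives $T_\alpha=[I\alpha-\clE]^{-1}$ with the norm bound \eqref{e:EkappaInvBdd}. Then, using the resolvent identities $R_{\kappa,\,\alpha}=[I\alpha-\clD_\kappa]^{-1}$ (\Lemma{Rkappa_alpha}) and $T_\alpha=[I\alpha-\clE]^{-1}$ together with the second resolvent identity
\[
R_{\kappa,\,\alpha}-T_\alpha = R_{\kappa,\,\alpha}\,(\clD_\kappa-\clE)\,T_\alpha ,
\]
we get $\lll R_{\kappa,\,\alpha}-T_\alpha\lll_v \le \lll R_{\kappa,\,\alpha}\lll_v\,\lll\clD_\kappa-\clE\lll_v\,\lll T_\alpha\lll_v$. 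By \eqref{RRapproxBbdd} we have $\lll R_{\kappa,\,\alpha}\lll_v\le\alpha^{-1}(1+b_v)\le\delta^{-1}(1+b_v)$, and by \eqref{e:EkappaInvBdd} we have $\lll T_\alpha\lll_v\le (1+b_v)/(\alpha-(1+b_v)\epsilon_0)\le (1+b_v)/(\delta-(1+b_v)\epsilon_0)$, both uniformly on $[\delta,\delta^{-1}]$. Collecting these, $\lll R_{\kappa,\,\alpha}-T_\alpha\lll_v$ is bounded by $\epsilon_0$ times a constant depending only on $\delta$ and $b_v$, so choosing $\epsilon_0$ small enough (and still below $\delta/(1+b_v)$) makes this at most $\epsilon/2$. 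Adding the two halves completes the proof.

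The only place where any care is required is the order of quantifiers: $\kappa$ must be selected first, driven purely by $\epsilon$, $\delta$ and $b_v$ via \Proposition{RRapprox}, and only afterwards is $\epsilon_0$ chosen small relative to that already-fixed $\kappa$ --- note the constant multiplying $\epsilon_0$ does not depend on $\kappa$, so there is no circularity. Everything else is the mechanical manipulation of the two resolvent identities and substitution of the norm bounds \eqref{RRapproxBbdd} and \eqref{e:EkappaInvBdd}; there is no genuine analytic obstacle, since all the hard work (the power-series convergence, the drift estimates, the generator truncation) is already done in the cited propositions and lemmas.
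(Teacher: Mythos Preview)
Your proof is correct and follows essentially the same route as the paper: the same triangle-inequality split $\lll R_\alpha-T_\alpha\lll_v\le\lll R_\alpha-R_{\kappa,\alpha}\lll_v+\lll R_{\kappa,\alpha}-T_\alpha\lll_v$, the same use of \Proposition{RRapprox} to fix $\kappa$, and the same invocation of \Proposition{i-ii} followed by the resolvent identity $A^{-1}-B^{-1}=A^{-1}(B-A)B^{-1}$ for the second term. The only cosmetic difference is that the paper, instead of bounding $\lll T_\alpha\lll_v$ directly via \eqref{e:EkappaInvBdd}, rewrites $T_\alpha(\clE-\clD_\kappa)R_{\kappa,\alpha}=[T_\alpha-R_{\kappa,\alpha}](\clE-\clD_\kappa)R_{\kappa,\alpha}+R_{\kappa,\alpha}(\clE-\clD_\kappa)R_{\kappa,\alpha}$ and solves the resulting inequality for $\lll T_\alpha-R_{\kappa,\alpha}\lll_v$; this is exactly the same computation that underlies \Lemma{newLemma}, so your direct use of \eqref{e:EkappaInvBdd} is if anything cleaner.
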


\begin{proof}
To establish the uniform bound 
in operator norm, first we approximate $R_\alpha$ 
by $R_{\kappa,\, \alpha}$.
Under (DV3), \Proposition{DV4rlx}~(i)
implies that we can use \Proposition{RRapprox}
as follows:  We fix $\kappa\ge\delta^{-1}$  such that the 
right-hand-side of this bound is no greater than $\half \epsy$, giving,
\begin{equation}
\lll   R_{\kappa,\, \alpha} -   R_\alpha \lll_v 
\le \half \epsy,\qquad \alpha\le \delta^{-1}. 
\label{GeneratorResolveMainA}
\end{equation}

We now invoke \Proposition{i-ii}:
Fix an operator $\clE $ of the form  \eqref{Kgenerate} satisfying,
\begin{equation*}
\lll \clD_\kappa - \clE \lll_v \le \epsy_0 ,
\end{equation*}
where $\epsy_0\in (0,\epsy)$ is to be determined.      
\Lemma{Rkappa_alpha} and \Lemma{newLemma} give,
\[
T_\alpha=  [\alpha I  -\clE]^{-1},
\quad 
R_{\kappa,\, \alpha}   = [\alpha I  - \clD_\kappa]^{-1} .
\] 
Hence the difference can be expressed,
\[
\begin{aligned}
   T_\alpha -   R_{\kappa,\, \alpha} &=  T_\alpha 
	[\clE -\clD_\kappa] R_{\kappa,\, \alpha}   \\
    &= [T_\alpha -   R_{\kappa,\, \alpha} ]   
	[\clE -\clD_\kappa]R_{\kappa,\, \alpha}   
	+ R_{\kappa,\, \alpha}[\clE -\clD_\kappa]R_{\kappa,\, \alpha} ,
\end{aligned}
\]
and applying the triangle inequality together with the 
sub-multiplicativity of the operator norm,
\[ 
  \lll T_\alpha -   R_{\kappa,\, \alpha} \lll_v 
   \le  
 \lll T_\alpha -   R_{\kappa,\, \alpha} \lll_v 
   \lll \clE -\clD_\kappa\lll_v\lll R_{\kappa,\, \alpha} \lll_v 
   + 
   \lll R_{\kappa,\, \alpha}\lll_v^2 \lll\clE -\clD_\kappa  \lll_v.
\]
\Lemma{Rkappa_alpha} gives the bound 
$\lll \alpha R_{\kappa,\, \alpha} \lll_v\le (1+b_v)$, 
and hence for $\alpha \in [\delta ,\delta^{-1}]$,
\[
\lll \clE -\clD_\kappa\lll_v\lll R_{\kappa,\, \alpha} \lll_v  
\le  \epsy_0 (1+b_v)/\delta .
\]
Assuming that $\epsy_0 >0$  is chosen so that  the right-hand-side 
is less than one, we can substitute into the previous bound 
and rearrange terms to obtain,
\[ 
  \lll T_\alpha -   R_{\kappa,\, \alpha} \lll_v  
  \le  \frac{  \lll R_{\kappa,\, \alpha}\lll_v^2 }{1-\lll \clE -\clD_\kappa\lll_v\lll R_{\kappa,\, \alpha} \lll_v} \lll \clE -\clD_\kappa  \lll_v
      \le \Bigl(\frac{ (1+b_v)^2 }{1-   \epsy_0 (1+b_v)/\delta}\Bigr) \Bigl(\frac{ \epsy_0  }{\delta^2 }\Bigr).
 \] 
Choosing $\epsy_0 =\fourth (1+b_v)^{-2} \epsy \delta^2 $ then gives,
\[
  \lll   T_\alpha -     R_{\kappa,\, \alpha} \lll_v  \le
   \fourth\epsy \frac{ 1 }{(1-\fourth \epsy)} 
    \le\half\epsy,\qquad  \alpha\in[\delta ,\delta^{-1}].
\]
This combined with \eqref{GeneratorResolveMainA}  
and the triangle inequality completes the proof. 
\end{proof} 

\subsection{Ergodicity}
\label{spec:linear}
 
To establish solidarity over an infinite time horizon we impose the 
reachability condition (A3) throughout the remainder of this section. 
Recall the construction of the approximating HMM process
$\bfPsi$ in Section~\ref{HMMapprox}, and the definition
of $v$-uniform ergodicity from Section~\ref{densities}.

\begin{proposition}
\label{t:HMMergodic}
Suppose the process $\bfPhi$ satisfies
conditions~{\em (A1) -- (A4)},
so that, in particular, $\bfPhi$ is 
$v$-uniformly ergodic with $v=e^V$
by \Proposition{hypoB}.
Then:
\begin{romannum}
\item
For each $\kappa>1$, 
the jump process $\bfPhi^\kappa$ 
is $v$-uniformly ergodic, with $v=e^V$.
\item
The HMM process $\bfPsi$ 
is $v$-uniformly ergodic, 
with $v=e^V$.
\end{romannum}
\end{proposition}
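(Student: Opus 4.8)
For both parts the plan is the same as in the last step of the proof of \Proposition{hypoB}: establish a geometric drift inequality of the form (V4) of \cite{dowmeytwe95a} in the Lyapunov function $v=e^V$ for the generator in question, check that the corresponding process is $\psi$-irreducible and aperiodic with the drift set small, and then invoke the main result of \cite{dowmeytwe95a}.

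For part~(i) I would deduce the drift for $\clD_\kappa$ from the one for $\clD$. By \Lemma{relaxed}, (DV3) gives $\clD v\le -v+b'$ with $v=e^V$ and the \emph{constant} $b'=b_v$; the bound \eqref{RalphaDrift} from the proof of \Lemma{DV4rlx}, taken at $\alpha=\kappa$, then gives $\kappa R_\kappa v\le\frac{\kappa}{\kappa+1}\bigl(v+b'/\kappa\bigr)$, and combining this with the definition \eqref{eq:Dkappa} of $\clD_\kappa$,
\[
\clD_\kappa v=\kappa\bigl(\kappa R_\kappa v-v\bigr)\le-\frac{\kappa}{\kappa+1}\,v+\frac{\kappa b'}{\kappa+1}.
\]
Absorbing half of the first term into the compact sublevel set $C=\{x\in\state:v(x)\le 2b'\}$ of $v$ yields $\clD_\kappa v\le-\beta_0 v+b_0\ind_C$ with $\beta_0=\frac{\kappa}{2(\kappa+1)}>0$. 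For the irreducibility hypotheses I would use the resolvent identities: the chain embedded at the jump times of $\bfPhi^\kappa$ has one-step kernel $\kappa R_\kappa$, and $R_\kappa(x,\varble)$ is equivalent as a measure to $R(x,\varble)$ for every $x$ by \eqref{REkun-5}, so under (A3) the process $\bfPhi^\kappa$ is $\psi$-irreducible and aperiodic with the same $\psi=R(x_0,\varble)$ as $\bfPhi$; moreover a short calculation with \eqref{REkun-5} writes $R_{\kappa,\alpha}$ as a positive multiple of a resolvent $R_\beta$ of $\bfPhi$ plus a multiple of the identity, and since $R_\beta\ge R$ when $\beta<1$, the continuous density (A2) makes $\bfPhi^\kappa$ a $T$-process, so that, exactly as in \Proposition{hypoB}, all compact sets — in particular $C$ — are small for $\bfPhi^\kappa$. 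Then \cite{dowmeytwe95a} gives part~(i).

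For part~(ii), fix $\kappa>1$, let $\beta_0,b_0,C$ be as above, and use \Proposition{i-ii} to pick a generator $\clE$ of the form \eqref{Kgenerate}, with all $r_{ij}>0$, satisfying $\lll\clD_\kappa-\clE\lll_v\le\epsilon_0$ for some $\epsilon_0\in(0,\beta_0)$. Since $\|v\|_v=1$, this operator-norm bound forces $|(\clE-\clD_\kappa)v(x)|\le\epsilon_0 v(x)$ pointwise, so
\[
\clE v\le\clD_\kappa v+\epsilon_0 v\le-(\beta_0-\epsilon_0)\,v+b_0\ind_C,
\]
which is again a geometric drift, now with rate $\beta_0-\epsilon_0>0$. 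The irreducibility hypotheses follow from the HMM description of $\bfPsi$ in \Section{HMMapprox}: because every $r_{ij}>0$ the finite hidden chain is irreducible, and from every initial region ($C_0$ or any $C_i$) there is, over any fixed interval $[0,t]$, a probability bounded below by a constant $c_*>0$ — depending only on $t$ and the finite index set $\{0,1,\ldots,N\}$ — that $\bfPsi$ has jumped and its current observation is drawn from $\nu_1$. As $\nu_1$ is a \emph{fixed} probability measure, this simultaneously shows that $\bfPsi$ is $\psi$-irreducible and aperiodic and that $Q^t(x,\varble)\ge c_*\nu_1(\varble)$ for every $x\in\state$, so that $C$ (indeed every subset of $\state$) is small for $\bfPsi$; \cite{dowmeytwe95a} then yields part~(ii), including uniqueness of the invariant probability measure $\varpi$.

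The drift computations are routine; I expect the main obstacle to be the verification of the irreducibility and small-set hypotheses. For $\bfPhi^\kappa$ this amounts to transferring the $T$-process/irreducibility structure of $\bfPhi$ through the resolvent identities \eqref{REkun-5}; for $\bfPsi$ the delicate point is that the process a priori evolves on all of $\state$ (and may start in the possibly unbounded set $C_0$) even though its recurrent behaviour is confined to the compact set $\ystate=\overline{\bigcup_i C_i}$, so smallness must be read off directly from the explicit jump dynamics and the fact that each $\nu_j$ is a fixed measure.
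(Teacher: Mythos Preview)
Your argument is correct. Part~(i) matches the paper almost exactly: both derive the drift for $\clD_\kappa$ by pushing the bound on $\clD v$ through $R_\kappa$ via the resolvent equation, and both transfer irreducibility and smallness from $\bfPhi$ to $\bfPhi^\kappa$ via the equivalence of $R_\kappa(x,\varble)$ and $R(x,\varble)$. Your identity $R_{\kappa,\alpha}=\frac{1}{\alpha+\kappa}I+\frac{\kappa^2}{(\alpha+\kappa)^2}R_\beta$ with $\beta=\frac{\kappa\alpha}{\kappa+\alpha}$ is correct and makes the $T$-process transfer cleaner than the paper's more implicit appeal to \Proposition{hypoB}.

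For part~(ii) the drift step differs. You bound $\clE v$ by perturbing the drift for $\clD_\kappa$ through $\lll\clD_\kappa-\clE\lll_v\le\epsy_0$, which forces you to take $\epsy_0$ strictly smaller than the drift rate $\beta_0$. The paper instead computes $\clE v$ directly from the explicit form \eqref{Kgenerate}: since the kernel $T=\IND_{C_0}\otimes\nu_1+\sum r_{ij}\IND_{C_i}\otimes\nu_j$ only produces measures supported on the compact set $\ystate$, one has $Tv\le\sup_{\ystate}v=:b$ everywhere, hence $\clE v=-\kappa v+\kappa Tv\le -\kappa v+\kappa b$. This is simpler, gives the better rate $\kappa$, and---more importantly---works for \emph{every} generator of the form \eqref{Kgenerate}, not only those within $\beta_0$ of $\clD_\kappa$; your version ties the ergodicity of $\bfPsi$ to a smallness constraint on the approximation parameter that is not part of the statement. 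Your treatment of irreducibility and smallness for $\bfPsi$ (the uniform minorisation $Q^t(x,\varble)\ge c_*\nu_1(\varble)$) is in the same spirit as the paper's and arguably more explicit.
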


Before proceeding with the proof we prove Lyapunov bounds that are useful in later results.


\begin{lemma}
\label{t:V4approxModels}
Under the assumptions of \Proposition{HMMergodic}, there exist $\delta_\circ>0$ and $b_\circ<\infty$ such that the following bound holds for each  $\kappa>1$:
\begin{equation}
\clD_\kappa v\le -\delta_\circ v + b_\circ.
\label{e:JumpPhiKappaV4}
\end{equation}
Consequently, the following bound holds for the semigroup,
\begin{equation} 
\lll P_\kappa^t\lll_v \le 1+ b_\circ/\delta_\circ,\qquad t\ge 0.
\label{e:JumpPhiKappaV4cor}
\end{equation}
\end{lemma}

\begin{proof}
The bound \eqref{e:JumpPhiKappaV4cor}
 follows from
\eqref{e:JumpPhiKappaV4} using a version of the comparison 
theorem  (see eqn.~(31) of  \cite{dowmeytwe95a}):
\[
 P^t_\kappa v \le  e^{-\delta_\circ t} v + b_\circ/\delta_\circ.
\]

The proof of \eqref{e:JumpPhiKappaV4} begins with the bound $
\clD v\le  [-\delta   + \ind_C]v$,
which holds under (DV3) because $W\ge 1$ everywhere.   
Letting $b_v^0 = b\max_C v$ then gives,
\[
\clD v\le  -\delta v  + b_v^0 \ind_C \, .
\]
Applying \Proposition{CT} (ii) with $h=v$, $f=\delta v$ and $g\equiv  b_v^0 \ind_C$  implies that,
\[
\kappa R_\kappa v + \delta R_\kappa v  \le   v   + R_\kappa  g \le   v   + \kappa^{-1} b_v^0.
\]
On rearranging terms this gives,
\[
\kappa R_\kappa v  \le (1+\delta \kappa^{-1} ) ^{-1}(v+   \kappa^{-1}b_v^0  ),
\]
and thence,
\[
\kappa R_\kappa v  - v  \le  - \frac{\delta}{\delta+\kappa} v
+ \frac{1}{\delta+\kappa} b_v^0.
\]
From the definition of the generator for the jump process we 
conclude that the desired bound holds,
\[
\clD_\kappa v
=
\kappa[
\kappa R_\kappa v  - v ] \le  - \frac{\delta\kappa}{\delta+\kappa} v
+ \frac{\kappa}{\delta+\kappa} b_v^0.
\]
This gives 
\eqref{e:JumpPhiKappaV4} on choosing the worst-case over $\kappa\ge 1$: 
\[
\delta_\circ =\delta  /(\delta+1),\quad   b_\circ=   b_v^0 .
\]
\end{proof}

\begin{proof}[Proof of \Proposition{HMMergodic}]
To establish (i) we first demonstrate that  $\bfPhi^\kappa$ 
is  irreducible and aperiodic.
If $\psi$ is a maximal irreducibility measure for $\bfPhi$,  
then  \Lemma{Rkappa_alpha} implies that $\psi \prec R_\kappa(x,\varble)$ 
for each $x$.  This implies that the chain with transition kernel 
$\kappa  R_\kappa$ is $\psi$-irreducible and aperiodic.   
Irreducibility  and aperiodicity for $\bfPhi^\kappa$ is then 
obvious since it is 
a jump process with Poisson jumps, and jump 
distribution $\kappa  R_\kappa$.

To complete the proof of (i) we establish condition (V4) 
of  \cite{dowmeytwe95a}.   From \Lemma{V4approxModels} we obtain,
\[
\clD_\kappa v\le -\half \delta_\circ v +  b_\circ\ind_{C_\circ},
\]
where $C_\circ =\{ x :\half \delta_\circ v (x) \le   b_\circ\}$.    
The sublevel set $C_\circ$ is compact,  and   
Proposition~\ref{t:hypoB} implies that compact sets are small, so this implies 
that the jump process is $v$-uniformly ergodic.

Analogous arguments for  $\bfPsi$ will establish (ii):
$\psi$-irreducibility and aperiodicity are immediate by
\Proposition{HMMb} and the fact that
all $r_{ij}$ in the definition of $\clE$
are strictly positive, from \Proposition{i-ii}.
To show that $\bfPsi$ is $v$-uniformly ergodic
simply note that, by the definition of $\clE$,
$$\clE v =-\kappa v+\kappa Tv
\leq -\kappa v+\kappa b\IND_\ystate,$$
where $b:=\sup_{b\in\ystate}v(x)$.
Again, this is a version of
condition (V4) of \cite{dowmeytwe95a}, 
and the conclusion follows from 
\cite{dowmeytwe95a}.
\end{proof}  

\subsection{Semigroup approximations}
\label{semiapprox}
 
We begin with an approximation
bound between the semigroups corresponding 
to $\bfPhi$ and $\bfPhi^\kappa$.

\begin{proposition}
\label{t:HMMapproxPPkappa}
Suppose that $\bfPhi$ satisfies conditions~{\em (A1) -- (A4)}. 
Then there exists $b_\bullet$ depending only on $\bfPhi$ such that, 
\[
\| P^t g - P^t_\kappa g\|_v \le b_\bullet\kappa^{-1} \|\clD^2 g\|_v,\qquad t\ge 0,\ \kappa\ge 1,
\]
for any $C^4$ function $g$ with compact support.
\end{proposition}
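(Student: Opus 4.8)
The plan is to compare the two semigroups through a Duhamel-type interpolation identity and then to recover uniformity over the infinite time horizon from the fact that $\clD^2 g$ has zero mean under the invariant measure. Throughout I take $g$ in the domain of $\clD^2$, so that $g,\clD g,\clD^2 g\in\Lv$ (as is implicit in the statement). The starting point is an algebraic identity: by \Proposition{ResolveDR}(iv) the jump generator factors as $\clD_\kappa=\kappa R_\kappa\,\clD=\clD\,\kappa R_\kappa$, and since $\clD_\kappa=-\kappa(I-\kappa R_\kappa)$ one has $I-\kappa R_\kappa=-\kappa^{-1}\clD_\kappa$; using also $\clD R_\kappa=R_\kappa\clD$ on the relevant domain (a consequence of the resolvent equation \eqref{ResolveDR}), this gives
\[
\clD-\clD_\kappa \;=\; (I-\kappa R_\kappa)\clD \;=\; -\kappa^{-1}\clD_\kappa\clD \;=\; -\,R_\kappa\,\clD^2 .
\]
Then, for fixed $t>0$, I would consider the curve $s\mapsto P^{t-s}_\kappa P^s g$ on $[0,t]$: it equals $P^t_\kappa g$ at $s=0$ and $P^t g$ at $s=t$, and — using $\tfrac{d}{ds}P^s g=P^s\clD g$, the boundedness of $\clD_\kappa$ on $\Lv$, and the identity above — its derivative is $P^{t-s}_\kappa(\clD-\clD_\kappa)P^s g=-P^{t-s}_\kappa R_\kappa P^s\clD^2 g$. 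The fundamental theorem of calculus, applied pointwise in $x\in\state$ to sidestep the lack of strong continuity of $\{P^t\}$ on $\Lv$, then yields the Duhamel identity
\[
P^t g-P^t_\kappa g \;=\; -\int_0^t P^{t-s}_\kappa\,R_\kappa\,P^s\clD^2 g\;ds .
\]

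The remaining task is to bound the three factors in the integrand uniformly in $t\ge 0$ and $\kappa\ge 1$. The factor $R_\kappa$ produces the $\kappa^{-1}$: under (DV3), \Lemma{relaxed} gives the drift \eqref{DV4rlx}, and then \Lemma{DV4rlx} gives $\lll R_\kappa\lll_v\le\kappa^{-1}(1+b')$. For $P^{t-s}_\kappa$ I would avoid invoking uniform-in-$\kappa$ ergodicity constants and instead use a Gronwall argument: from $\clD_\kappa v\le-\delta_\kappa v+b_\kappa$ with $b_\kappa/\delta_\kappa\le\delta^{-1}$ (established inside the proof of \Proposition{HMMergodic}) together with $\tfrac{d}{dt}P^t_\kappa v=P^t_\kappa\clD_\kappa v$, one gets $P^t_\kappa v\le v+\delta^{-1}$, hence $\lll P^t_\kappa\lll_v\le 1+\delta^{-1}$ for all $t\ge 0$, $\kappa\ge 1$ (recall $v\ge 1$). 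Finally, the factor $P^s\clD^2 g$ is where solidarity over the infinite horizon is regained: since $\pi$ is invariant, $\pi(\clD^2 g)=\pi(\clD(\clD g))=0$, so $v$-uniform ergodicity of $\bfPhi$ (\Proposition{hypoB}, available under (A1)--(A4)) gives $\|P^s\clD^2 g\|_v=\|(P^s-\One\otimes\pi)\clD^2 g\|_v\le e^{B_0-b_0 s}\|\clD^2 g\|_v$. Substituting into the Duhamel identity and using sub-multiplicativity of the operator norm,
\[
\|P^t g-P^t_\kappa g\|_v \;\le\; (1+\delta^{-1})\,\kappa^{-1}(1+b')\,\|\clD^2 g\|_v\int_0^t e^{B_0-b_0 s}\,ds \;\le\; \frac{(1+\delta^{-1})(1+b')e^{B_0}}{b_0}\,\kappa^{-1}\|\clD^2 g\|_v ,
\]
which is the claim with $b_\bullet:=(1+\delta^{-1})(1+b')e^{B_0}/b_0$ depending only on $\bfPhi$.

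I expect the main obstacle to be precisely this infinite-horizon uniformity: the integrand of the Duhamel formula has no decay on its own, so a crude estimate only gives a bound of order $t\kappa^{-1}$. The essential point is the observation $\pi(\clD^2 g)=0$, which lets $v$-uniform ergodicity supply the integrable factor $e^{-b_0 s}$; combined with the $\kappa^{-1}$ from $\lll R_\kappa\lll_v$ and the $\kappa$-uniform bound on $\lll P^t_\kappa\lll_v$, this produces the stated estimate. A secondary, more routine difficulty is making the interpolation identity and the operator manipulations rigorous in the absence of strong continuity of $\{P^t\}$ on the weighted space $\Lv$; this is handled by arguing pointwise in $x$ and appealing to the domain statements in \Proposition{ResolveDR} and \Lemma{RangeResolve}.
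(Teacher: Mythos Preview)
Your argument is correct and shares the paper's architecture: the same Duhamel identity (the paper derives it by differentiating $\epsy_g(t)=P^tg-P^t_\kappa g$ and solving the resulting linear ODE, which after a change of variable coincides with your interpolation formula), the same factorization $\clD-\clD_\kappa=-\kappa^{-1}\clD_\kappa\clD=-R_\kappa\clD^2$, and the same crucial observation that $\pi$ annihilates $\clD^2g$, so that $v$-uniform ergodicity furnishes the integrable-in-time decay. The one substantive difference lies in how the jump-semigroup factor is controlled. The paper invokes $v$-uniform ergodicity of $\bfPhi^\kappa$ (from \Proposition{HMMergodic}) \emph{in addition} to that of $\bfPhi$, obtaining $\|P^s_\kappa P^{t-s}h\|_v\le e^{2B_0-b_0t}\|h\|_v$ with $h=\clD_\kappa\clD g$, and hence the bound $t\,e^{2B_0-b_0t}$ after integrating over $[0,t]$. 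You instead bound $\lll P^s_\kappa\lll_v$ directly via Gronwall from the drift $\clD_\kappa v\le-\delta_\kappa v+b_\kappa$, and use ergodicity only of $\bfPhi$. Your route is slightly cleaner, since the $\kappa$-independence of $b_\bullet$ is then manifest; the paper's version implicitly relies on the ergodicity constants for $\bfPhi^\kappa$ being uniform over $\kappa\ge1$ (true, because the drift ratio $b_\kappa/\delta_\kappa=b/\delta$ is $\kappa$-free, but not spelled out there). One small slip: that ratio equals $b/\delta$, not $\delta^{-1}$ as you wrote, but this is immaterial since all you need is that it does not depend on $\kappa$.
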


\begin{proof}
Under the assumption of the proposition,   the local-martingale assumption can be strengthened to the martingale property \eqref{e:extgenMartA}.    
That is,  for any $T>0$,
\[
\begin{aligned}
\Expect_x\bigl[ g(\Phi(T))  \bigr]
&= 
g(x) + \Expect_x\Bigl[   \int_0^T \clD g (\Phi(t))\, dt\Bigr]
\\
\Expect_x\bigl[ \clD g(\Phi(T))  \bigr]
&= 
\clD g(x) + \Expect_x\Bigl[   \int_0^T \clD^2 g (\Phi(t))\, dt\Bigr]\,,
\qquad x\in\state.
\end{aligned}
\]
It follows that $P^T g$ is differentiable in $T$, and the same is true for $P_\kappa^Tg$.

Denote the 
difference $\epsy_g(t) = P^t g - P_\kappa^t g$.   We have for any $t$,
\[
\begin{aligned}
\ddt \epsy_g(t) & = P^t \clD  g - P_\kappa^t \clD_\kappa g
\\
& =  \clD_\kappa[P^t g -  P_\kappa^t g]  +   P^t [\clD  -\clD_\kappa] g
\\
& =  \clD_\kappa[P^t g -  P_\kappa^t g]  - \kappa^{-1}  P^t [\clD\clD_\kappa g],
\end{aligned}
\]
where in the second equation we have used here the fact that the 
operators $P^t$, $P_\kappa^t$, and $\clD_\kappa$ all commute.   
The final equation follows from \eqref{clDclDkappa} and the definition of $\clD_\kappa$ in \eqref{eq:Dkappa}.
Writing $h=\clD\clD_\kappa g$,  this can be solved to give,
\[
\epsy_g(t)=\epsy_g(0)  -  \kappa^{-1} \int_0^t e^{s\clD_\kappa} 
 P^{t-s} h\, ds.
\]
Substituting $P_\kappa^s = e^{s\clD_\kappa} $ and  $\epsy_g(0) =0$ simplifies this expression:
\begin{equation}
\epsy_g(t) =  - \kappa^{-1}  \int_0^t  P_\kappa^s   P^{t-s} h\, ds.
\label{HMMapproxPPkappaB}
\end{equation}

We have $\pi(h)=\pi( P^{t-s} h)=0$, so that by \Proposition{hypoB} we have for some $B_0<\infty $ and $\beta_0>0$,  
\[
\|P^{t-s} h\|_v\le  e^{B_0 -\beta_0 (t-s)}  \| h\|_v ,\qquad 0\le s\le t
\]
Consequently, for each $x\in\state$ and $t\ge 0$,  
\[
\begin{aligned}
\Bigl|  \int_0^t  P_\kappa^s   P^{t-s} h \, (x) \, ds \Bigr|
&\le 
 \| h\|_v e^{B_0}
 \int_0^t  e^{ -\beta_0 (t-s)}  P_\kappa^s   v \, (x) \, ds
\end{aligned}
\]
Recalling the bound \eqref{e:JumpPhiKappaV4cor} on $\lll  P_\kappa^s   \lll_v$ and substituting into \eqref{HMMapproxPPkappaB} 
gives $\|\epsy_g(t)\|_v \le \kappa^{-1} \beta_0^{-1} \| h\|_v e^{B_0} $.    We have $h=\clD\clD_\kappa g$, 
and hence  the generator relationship  \eqref{clDclDkappa} and  
the generator bound in \Proposition{DV4rlx}~(ii) give,
\[
\|h\|_v\le \lll \kappa R_\kappa\lll_v\|\clD^2 g\|_v   
\le(1+b_v)\|\clD^2 g\|_v.
\]
Finally, substituting this into the previous 
bound on $\|\epsy_g(t)\|_v $ completes the proof.
\end{proof}

Similar arguments provide approximation bounds  for the semigroups 
corresponding to $\bfPhi^\kappa $ and $\bfPsi$, where the latter 
is denoted $\{Q^t\}$ and defined in  \eqref{KQ}.

\begin{proposition}
\label{t:HMMapproxPPkappaB}
Suppose that $\bfPhi$ satisfies conditions~{\em (A1)--(A4)}. 
Then there 
exists $b_\bullet$ depending only on $\bfPhi$ such that for $g\in\Lv$,
\[
\| P^t_\kappa g-  Q^t  g\|_v \le b_\bullet \epsy \|  g\|_v.
\]
\end{proposition}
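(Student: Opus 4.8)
The plan is to run the Duhamel-type argument from the proof of \Proposition{HMMapproxPPkappa}, now comparing the semigroups $\{P^t_\kappa\}=\{e^{t\clD_\kappa}\}$ and $\{Q^t\}=\{e^{t\clE}\}$, where $\kappa>1$ is fixed and $\clE$ is a generator of the form \eqref{Kgenerate} furnished by \Proposition{i-ii} with $\lll\clD_\kappa-\clE\lll_v\le\epsy$ and all $r_{ij}>0$ (so that \Proposition{HMMergodic} is available), and $v=e^V$. Since $\clD_\kappa$ and $\clE$ are bounded operators on $\Lv$, for any $g\in\Lv$ the function $\epsy_g(t):=P^t_\kappa g-Q^t g$ satisfies $\epsy_g(0)=0$ and $\ddt\epsy_g(t)=\clD_\kappa\epsy_g(t)+[\clD_\kappa-\clE]Q^t g$, so by variation of constants
\[
\epsy_g(t)=\int_0^t P^{t-s}_\kappa\,[\clD_\kappa-\clE]\,Q^s g\,ds,\qquad t\ge 0 .
\]

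The crucial observation is that both generators kill constants: $\clD_\kappa\One=\kappa(-\One+\kappa R_\kappa\One)=0$ because $R_\kappa(x,\state)\equiv\kappa^{-1}$, and $\clE\One=0$ because $\{r_{ij}\}$ is a transition matrix and each $\nu_i$ is a probability measure. Hence $[\clD_\kappa-\clE]\One=0$ and $Q^s\One=\One$, so, with $\varpi$ the invariant probability measure of $\bfPsi$,
\[
[\clD_\kappa-\clE]Q^s g=[\clD_\kappa-\clE]\bigl(Q^s g-\varpi(g)\One\bigr),
\]
and therefore, using the $v$-uniform ergodicity of $\bfPsi$ from \Proposition{HMMergodic} (with constants $B_0,b_0$, chosen large/small enough to also serve $\bfPhi^\kappa$) together with $\lll\clD_\kappa-\clE\lll_v\le\epsy$,
\[
\bigl\|[\clD_\kappa-\clE]Q^s g\bigr\|_v\le\epsy\,\|Q^s g-\varpi(g)\One\|_v\le\epsy\,e^{B_0-b_0 s}\,\|g\|_v .
\]
For the remaining factor, \Proposition{HMMergodic} gives $\lll P^u_\kappa-\One\otimes\pi\lll_v\le e^{B_0-b_0 u}$, and $\lll\One\otimes\pi\lll_v\le\pi(v)\le b'$ by \Lemma{DV4rlx}, whence $\lll P^u_\kappa\lll_v\le b'+e^{B_0}$ for all $u\ge0$. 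Substituting both bounds into the integral representation,
\[
\|\epsy_g(t)\|_v\le(b'+e^{B_0})\,\epsy\,\|g\|_v\int_0^\infty e^{B_0-b_0 s}\,ds=\frac{(b'+e^{B_0})\,e^{B_0}}{b_0}\,\epsy\,\|g\|_v ,
\]
which is the desired bound with $b_\bullet:=(b'+e^{B_0})e^{B_0}/b_0$.

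The one delicate point is that $b_\bullet$ must depend on $\bfPhi$ only. Here $b'$ is the constant of \Lemma{relaxed}, fixed by (DV3), and the ergodic constants for $\bfPhi^\kappa$ can be taken uniform over $\kappa>1$ exactly as in the proof of \Proposition{HMMergodic}, where $\delta_\kappa=\delta\kappa/(\delta+\kappa)\ge\delta/(1+\delta)$ and $b_\kappa=\kappa/(\delta+\kappa)\le 1$. The real obstacle is the $v$-uniform ergodicity rate of $\bfPsi$: the drift inequality $\clE v\le-\kappa v+\kappa(\sup_{\ystate}v)\IND_{\ystate}$ used in \Proposition{HMMergodic} carries constants that a priori degrade as the compact set $\ystate=\overline{\bigcup_i C_i}$ grows, which it does as $\epsy\downarrow0$. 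This has to be controlled by exploiting that, along the construction of \Proposition{i-ii}, the $\nu_i$ are supported in the $C_i$, all $r_{ij}>0$, and $\kappa R_\kappa$ is tracked closely, so that $\bfPsi$ inherits a minorization with parameters depending only on $\bfPhi$ and $\kappa$; this quantitative bookkeeping, rather than any of the routine estimates above, is the crux of the argument and is what pins down the final value of $b_\bullet$.
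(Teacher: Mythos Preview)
Your argument is essentially the paper's: the same Duhamel/variation-of-constants representation, the same cancellation $[\clD_\kappa-\clE]\One=0$ allowing the centering by $\varpi(g)$, and then the same combination of $v$-uniform ergodicity of $\bfPsi$ (to make the integrand decay) with a uniform-in-time bound on $\lll P_\kappa^s\lll_v$. The only cosmetic difference is that you take the crude sup $\lll P_\kappa^u\lll_v\le b'+e^{B_0}$ before integrating, whereas the paper keeps the $s$-dependent bound $\lll P_\kappa^s\lll_v\le e^{B_0-b_0 s}+\pi(v)$ and then integrates $\int_0^t\bigl(e^{2B_0-b_0 t}+\pi(v)e^{B_0-b_0(t-s)}\bigr)\,ds$; both lead to a finite constant times $\epsy\|g\|_v$.

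The concern you raise in your last paragraph---that the ergodicity constants $(B_0,b_0)$ for $\bfPsi$ could in principle degrade as $\ystate$ grows with $\epsy\downarrow0$, so that ``$b_\bullet$ depends only on $\bfPhi$'' is not automatic---is well spotted. The paper's proof does not address this either: it simply uses the same symbols $B_0,b_0$ for the ergodicity bounds of both $\bfPhi^\kappa$ and $\bfPsi$ and proceeds. So your proof is at least as complete as the paper's on this point; the ``quantitative bookkeeping'' you allude to is not carried out in the paper.
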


\begin{proof} 
The proof is similar to the proof of \Proposition{HMMapproxPPkappa}:
We fix $g\in\Lv$, and denote the error by,
\[
\epsy_g(t) =
P^t_\kappa g- Q^t g  ,\qquad t\ge 0.
\]
The right hand side is differentiable by construction of the two semi-groups, with 
\[ 
\ddt \epsy_g(t)  =  \clD_\kappa\epsy_g(t)  +  [\clD_\kappa-\clE ] Q^t g
\]
This can be solved to give,
\[
\epsy_g(t)  = 
\epsy_g(0)  + \int_0^t P_\kappa^s  [\clD_\kappa-\clE ] Q^{t-s} g\, ds.
\]
We have $\epsy_g(0) =0$.  Moreover, 
$ [\clD_\kappa-\clE ] 1\equiv 0$, which implies 
that $ [\clD_\kappa-\clE ] g =  [\clD_\kappa-\clE ][g-\varpi(g)]$.  
Here, $\varpi$ denotes the unique invariant measure of
the process $\bfPsi$, guaranteed to exist by Proposition~\ref{t:HMMergodic}.
Hence,
\[
\|\epsy_g(t)\|_v  
\le 
\lll \clD_\kappa-\clE\lll_v \int_0^t \lll P_\kappa^s \lll_v
 \| (Q^{t-s} -1\otimes\varpi)g\|_v\, ds.
\]

Substituting the bound $ \lll P_\kappa^s \lll_v\le 1+ b_\circ/\delta_\circ$ from 
\Lemma{V4approxModels} gives,
\[
\|\epsy_g(t)\|_v  
\le 
\lll \clD_\kappa-\clE\lll_v  (1+ b_\circ/\delta_\circ) \| g\|_v 
\int_0^t \lll Q^{t-s} -1\otimes\varpi\lll_v \, ds\,,
\]
and \Proposition{i-ii} gives $ \lll \clD_\kappa-\clE\lll_v \le \epsy$.  This establishes the result with
\[
b_\bullet =   (1+ b_\circ/\delta_\circ)  \int_0^\infty \lll  Q^r -1\otimes\varpi\lll_v \, dr \, ,
\]
which is finite, by  \Proposition{HMMergodic}.
\end{proof}

The following bound is an immediate consequence 
of the last Proposition.

\begin{corollary}
\label{t:hidden}
Under the assumptions of Proposition~\ref{t:HMMapproxPPkappaB},
for each $\epsy>0$  we can construct the approximating process 
$\bfPsi$ described in Section~\ref{HMMapprox} so that the 
the Markov processes $\bfPhi$ and $\bfPsi$ 
have unique invariant probability measures $\pi$ and $\varpi$,
respectively, satisfying, 
\[
\|\pi -\varpi\|_{v} \le\epsy.
\]
\qed
\end{corollary}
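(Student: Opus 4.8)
The plan is to route the estimate through the jump process $\bfPhi^\kappa$ rather than comparing the semigroups $\{P^t\}$ and $\{Q^t\}$ of $\bfPhi$ and $\bfPsi$ directly. The reason is that the bound of \Proposition{HMMapproxPPkappa} requires the test function to satisfy $\clD^2 h\in\Lv$, which is too restrictive when one tests against an arbitrary signed measure, whereas \Proposition{HMMapproxPPkappaB} controls $\|P_\kappa^t g-Q^t g\|_v$ for \emph{every} $g\in\Lv$. The observation that makes this work is that the invariant law $\pi$ of $\bfPhi$ is also invariant for the jump process $\bfPhi^\kappa$: since $\pi P^t=\pi$ for all $t$ we get $\pi R_\kappa=\kappa^{-1}\pi$, hence $\pi\clD_\kappa=\kappa(-\pi+\kappa\pi R_\kappa)=0$ by the definition \eqref{eq:Dkappa} of $\clD_\kappa$, and since $\clD_\kappa$ is a bounded operator on $\Lv$ this gives $\pi P_\kappa^t=\pi e^{t\clD_\kappa}=\pi$ for all $t\ge0$. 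On the other side, \Proposition{HMMergodic}(ii) tells us that $\bfPsi$ has a unique invariant probability measure $\varpi$ and is $v$-uniformly ergodic; together with $\pi(v)\le b'<\infty$ from \Lemma{DV4rlx} (applicable under (DV3) via \Lemma{relaxed}), this gives $\|\pi Q^t-\varpi\|_v=\|\pi(Q^t-\One\otimes\varpi)\|_v\to0$ as $t\to\infty$, using $\pi(\One\otimes\varpi)=\varpi$ and $\|\nu K\|_v\le\|\nu\|_v\lll K\lll_v$.

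With these two facts, the main step is to bound $\|\pi-\pi Q^t\|_v$ uniformly in $t$. For $h\in\Lv$ with $\|h\|_v\le1$, using $\pi=\pi P_\kappa^t$ we have $(\pi-\pi Q^t)(h)=\pi\big((P_\kappa^t-Q^t)h\big)$, so $|(\pi-\pi Q^t)(h)|\le\pi(v)\,\|(P_\kappa^t-Q^t)h\|_v\le\pi(v)\,b_\bullet\epsy_0$, where $\epsy_0:=\lll\clD_\kappa-\clE\lll_v$ and $b_\bullet$ depends only on $\bfPhi$, by \Proposition{HMMapproxPPkappaB}. Taking the supremum over such $h$ gives $\|\pi-\pi Q^t\|_v\le\pi(v)\,b_\bullet\epsy_0\le b'b_\bullet\epsy_0$ for every $t\ge0$. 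By the triangle inequality, $\|\pi-\varpi\|_v\le\|\pi-\pi Q^t\|_v+\|\pi Q^t-\varpi\|_v\le b'b_\bullet\epsy_0+\|\pi Q^t-\varpi\|_v$, and letting $t\to\infty$ the last term vanishes, so $\|\pi-\varpi\|_v\le b'b_\bullet\epsy_0$.

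Finally, in the construction of $\bfPsi$ the generator $\clE$ is supplied by \Proposition{i-ii}, so for a fixed $\kappa>1$ the quantity $\epsy_0=\lll\clD_\kappa-\clE\lll_v$ can be taken as small as we please. Given the target precision $\epsy>0$, we choose $\clE$ so that $\epsy_0\le\epsy/(b'b_\bullet)$, which yields $\|\pi-\varpi\|_v\le\epsy$ as required. The point requiring the most care is the identity $\pi\clD_\kappa=0$, i.e.\ the invariance of $\pi$ for $\bfPhi^\kappa$: this is exactly what lets us replace $\pi$ by $\pi P_\kappa^t$ inside the estimate and thereby bring into play the semigroup difference $P_\kappa^t-Q^t$, which \Proposition{HMMapproxPPkappaB} bounds for all test functions; the remainder is a routine use of $v$-uniform ergodicity and the triangle inequality.
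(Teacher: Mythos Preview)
Your argument is correct and is precisely the kind of expansion the paper leaves implicit: the paper itself gives no proof beyond declaring the corollary ``an immediate consequence'' of \Proposition{HMMapproxPPkappaB}, and your route through $\bfPhi^\kappa$---using that $\pi$ is invariant for $\bfPhi^\kappa$, bounding $\|\pi-\pi Q^t\|_v$ via \Proposition{HMMapproxPPkappaB}, and then letting $t\to\infty$ with the $v$-uniform ergodicity of $\bfPsi$---is exactly the natural way to cash that claim out. The key observation you flag, $\pi\clD_\kappa=0$, is indeed what makes the semigroup bound of \Proposition{HMMapproxPPkappaB} applicable without the $\clD^2 g\in\Lv$ restriction.
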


\newpage

\appendix
\begin{center}
\LARGE
\bf 
Appendix
\end{center}

\section{Appendix: Proof of \Proposition{DV4rlx}}

The drift condition (DV3) can be expressed as follows, in terms of the function $v = e^V$:
\[
\clD v\le (-\delta W +b\ind_C)v.
\]
By assumption, we have  $\delta W(x)\ge \delta$ everywhere.  Moreover,   $\delta W(x)\ge 1$ on the complement of the sublevel set $C_W(\delta^{-1}) $ (see \eqref{CFr}).  This set is compact under (A4),  so that the desired bound holds with,
\[
b_v \eqdef b\Bigl(\sup_{x\in C}v(x)\Bigr) 
	+ 
(1-\min(\delta,1)) 
	\Bigl(\sup_{x\in C_F(r)}v(x)\Bigr)  <\infty\, .
\]
This establishes part (i).

\medskip

Under \eqref{DV4rlx} we can apply  \Proposition{CT} (ii) with $h=v$, $f=  v$ and $g\equiv  b_v$ to obtain  
$
\alpha R_\alpha v +  R_\alpha v  \le     v   + \alpha^{-1} b_v$,  or 
\begin{equation}
\alpha  R_\alpha v\le (1+\alpha^{-1})^{-1} \bigl( v + \alpha^{-1}b_v \bigr). 
\label{RalphaDrift}
\end{equation}
Iterating this bound we obtain, for any $n\ge 1$,
\[
\begin{aligned}
(\alpha  R_\alpha )^nv&\le (1+\alpha^{-1})^{-n} v + 
 \alpha^{-1}b_v \sum_{k=1}^n (1+\alpha^{-1})^{-k  } 
\\
&\le v+  \alpha^{-1}b_v [1- (1+\alpha^{-1})]^{-1} = v+b_v.
\end{aligned}
\]
Hence $(\alpha  R_\alpha )^nv \le (1+b_v) v$, which is the first bound.  

The second follows from \eqref{RalphaDrift} and the (discrete-time)
comparison theorem of \cite{meyn-tweedie:book2}, which gives,
\[
\pi(v)<\infty \quad \text{\it and}\quad
\pi(v)\le (1+\alpha^{-1})^{-1}\bigl( \pi(v) +  \alpha^{-1}b_v\bigr). 
\] 
Rearranging terms gives $ (1+\alpha^{-1})\pi(v)\le    
\pi(v) +  \alpha^{-1}b_v$,
or $\pi(v)\le   b_v$ as claimed.
\qed

\section{Appendix: Proof of \Proposition{RapproxConverse}}
Under the separability assumption we can find, for each $n\ge 1$,  
a compact set $\ystate_n$ and a kernel $T_n$ supported on 
$\ystate_n$ satisfying  $\lll R-T_n\lll_v \le 2^{-n}$.   We assume without loss of generality that $\ystate_n\subseteq \ystate_{n+1}$ for each $n$, and that $\bigcup_n \ystate_n = \state$.

Writing $v_n = v \ind_{\ystate_n^c}$ we have  $\lll v_n \lll_v=1$   
and $T_n v_n \equiv 0$.  Consequently,   for each $n\ge 1$,
\begin{equation}
R v_n  =   (R-T_n)v_n + T_n v_n =  (R-T_n)v_n 
           \le  \lll R-T_n\lll_v v\le 2^{-n} v,\qquad n\ge 1.
\label{RvnBdd}
\end{equation}
The desired solution to (DV3) is constructed as follows.  First define the sequence of 
finite-valued functions
on $\state$,
\[
u_-^m \eqdef   v + \sum_{n=1}^m v_n
		=   \Bigl(1 +    \sum_{n=1}^m\ind_{\ystate_n^c}\Bigr)  v,\qquad v_-^m = R u_-^m,\quad m\ge 1,
\]
and denote $u_-=\lim_{m\to\infty} u_-^m$, $v_-=\lim_{m\to\infty} v_-^m$.  
Applying \eqref{RvnBdd}, we conclude that $v_-\in\Lv$, with 
the explicit bound,
\[
\| v_-\|_v  \le  \lll R\lll_v 
+ \sum_{n=1}^\infty \|Rv_n\|_v \le  \lll R\lll_v + 1.
\]
Each of the functions $v_-^m$ is continuous since $R$ has the strong Feller property.  These functions converge to $v_-$ uniformly on compact subsets of $\state$, showing that $v_-$ is continuous.  We let    $V_- = \log( v_-)$, which is also continuous. 
 

It follows from \Proposition{ResolveDR} that the resolvent equation holds,
$\clD v_- = v_- - u_-$, and consequently, 
recalling the nonlinear generator \eqref{fle78a}, 
\[
 \clH(V_-) = (v_-)^{-1} \clD v_-   =   1- u_-/v_-.
\]
By construction, the function $u_-/v_-$ has 
compact sublevel sets.
Writing $W = \max ( u_-/v_- -1 ,1)$ and $C=\{ x : W(x)\leq1 \}$ 
then gives,
\[
 \clH(V_-)    \le - W  +  2\ind_C,
\]
which is a version of (DV3).  The function $W$ is not continuous. 
However, it has compact sublevel sets, so there exists a 
continuous function $W_-\colon\state\to[1,\infty)$ 
with compact sublevel sets, satisfying $W_-\le W$ 
everywhere. The pair $(V_-,W_-)$ is the 
desired solution to (DV3).
\qed

\section{Proof of \Proposition{i-ii}}

Before giving the proof, we state and prove some
preliminary results. The assumptions of \Proposition{i-ii}
remain in effect throughout this subsection.

On setting $h=v=e^V$, $f=\delta W h$, 
and $g=b\ind_C h$ in \Proposition{CT} 
we obtain the following bound:

\begin{lemma}
\label{t:W1V}
Under {\em (DV3)}, with $v=e^V$, 
we have, 
\begin{equation*}
R I_{W_1}   v \le  v   +  b s_0,
\end{equation*}
where $W_1= 1+\delta W$, $s_0 = R I_C v$
and for any function $F$, 
$I_F$ denotes the multiplication kernel 
$I_F(x,dy)=F(x)\delta_x(dy)$.
\end{lemma}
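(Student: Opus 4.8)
The plan is to obtain the bound as an immediate application of \Lemma{RbddWlemma}, once (DV3) is rewritten in terms of the linear generator $\clD$ rather than the nonlinear generator $\clH$. Recall from \eqref{fle78a} that $\clH(V) = e^{-V}\clD e^V = v^{-1}\clD v$ with $v = e^V$ (in particular, (DV3) already presupposes $v$ is in the domain of $\clD$). Multiplying the inequality $\clH(V)\le -\delta W + b\ind_C$ through by $v$ therefore gives
\[
\clD v \le -\delta W v + b\,\ind_C v,
\]
which is exactly an inequality of the form $\clD h \le -f + g$ with $h = v$, $f = \delta W v$, and $g = b\,\ind_C v$.

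Next I would verify the hypotheses of \Lemma{RbddWlemma} for this choice. Nonnegativity of $h$, $f$, $g$ is immediate, since $v = e^V \ge 1$, $W \ge 1$, and $\delta, b$ are positive constants. Boundedness of $g = b\,\ind_C v$ is where the standing assumption (A4) enters: the set $C$ is compact and $v$ is continuous, so $v$ is bounded on $C$ and hence $g$ is bounded. With these conditions in place, \Lemma{RbddWlemma} yields
\[
R(f + h) \le h + R g.
\]

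Finally I would simplify both sides. On the left, $f + h = \delta W v + v = (1+\delta W)v = W_1 v$, and since $I_{W_1}$ is the multiplication kernel $I_{W_1}(x,dy) = W_1(x)\delta_x(dy)$, we have $R(W_1 v) = R\,I_{W_1} v$. On the right, $R g = b\,R(\ind_C v) = b\,R\,I_C v = b s_0$ by the definition of $s_0$. Substituting gives $R\,I_{W_1} v \le v + b s_0$, as claimed. There is no real obstacle here beyond checking the elementary hypotheses of \Lemma{RbddWlemma}; the only point needing any attention is the boundedness of $g$, which relies on the compactness of $C$ together with the continuity of $v$ furnished by (A4).
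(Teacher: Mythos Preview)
Your proof is correct and follows exactly the paper's approach: the paper's entire argument is the one-line remark ``On setting $h=v=e^V$, $f=\delta W h$, and $g=b\ind_C h$ in \Lemma{RbddWlemma} we obtain the following bound,'' and you have simply spelled out the verification of the hypotheses and the simplification of both sides.
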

 
For each $r\ge 1$, we define the compact sets,
\begin{equation*}
C_r = C_v(r)\cap C_W(r), 
\end{equation*} 
in the notation of equation \eqref{CFr}.  
From the assumption that $V$ and $W$ are 
continuous with compact sublevel sets,
we obtain,
\begin{equation}
\lim_{r\to\infty} \inf_{x\in C_r^c} V(x) 
= \lim_{r\to\infty} \inf_{x\in C_r^c} W(x) = \infty.
\label{CrCoerce}
\end{equation}
The above bounds on the resolvent will allow us 
to approximate $R$ by a kernel 
supported on $C_r$ for suitably large $r\ge 1$.  
To that end, we choose and fix a continuous   
function  $W_0\colon\state\to[1,\infty)$   
in $L_\infty^W$, satisfying $\|W_0^2\|_W =1$, and  
whose growth  at infinity is strictly slower than 
$W^\half$  in the sense that,
\begin{equation}
\lim_{r\to\infty} \| W_0^2\ind_{C_W(r)^c}\|_W = 0\, .
\label{Wo}
\end{equation}
This can be equivalently expressed,
\[
\lim_{r\to\infty} \sup_{x\in\state}\,
        \Bigl [
        \frac{W_0(x)}{\sqrt{W(x)}}\,\ind_{\{W(x)>r\}}
        \Bigr ]
= 0\,.
\]
The weighting function is simultaneously increased to,
\begin{equation*}
v_0 = W_0 v.
\end{equation*}

The following Lemma justifies truncating $R$
to a compact set.

\begin{lemma}
\label{t:RbddW}
Under {\em (DV3)} the resolvent kernel $R$ satisfies   
$\lll R I_{W}\lll_v<\infty$, and,
\begin{equation}
\lim_{r\to\infty}  \lll I_{W_0} (R - I_{C_r} R I_{C_r} )I_{W_0}\lll_{v_0} = 0.
\label{WoLimit}
\end{equation}
\end{lemma}

\begin{proof} 
\Lemma{W1V} implies that $\lll R I_{W}\lll_v$ is finite as claimed:  
We have the explicit bound  
$\lll R I_{W}\lll_v\le \delta^{-1}(1+b \|s_0\|_v)$.   
The limit \eqref{WoLimit} is also based on the same lemma.
Starting with the identity $R - I_{C_r} R I_{C_r}  
=    I_{C_r} R I_{C^c_r} +  I_{C^c_r} R$ we obtain,
\begin{equation}
\begin{aligned}
I_{W_0}  (R - I_{C_r} R I_{C_r} )I_{W_0}  v_0    
&  =  I_{W_0} (R - I_{C_r} R I_{C_r} )I_{W^2_0}  v     
\\
	&  =  I_{W_0}  [  I_{C_r} R I_{C^c_r} I_{W^2_0}] v   
		+ I_{W_0}  [    I_{C^c_r} RI_{W_0}   ]v_0. 	     
\end{aligned}
\label{WoLimitA}
\end{equation}
These two terms can be bounded separately.  For the first
term on the right-hand-side consider the following,
\[
 [ I_{C_r} R I_{C^c_r} I_{W^2_0}] v 
 \le
 \lll     I_{C_r} R I_{C^c_r} I_{W} \lll_v  \epsy_r v
 \le
   \lll    R  I_{W} \lll_v  \epsy_r  v,
\]
where $\epsy_r\eqdef \sup_{x\in C_r^c} W_0(x) W^{-\half}(x)$.   
Multiplying both sides by $W_0$ then gives,
\[
 [ I_{W_0}I_{C_r} R I_{C^c_r} I_{W_0}] v_0 
 \le
   \lll    R  I_{W} \lll_v  \epsy_r  v_0,\qquad r\ge 1,
\]
which means that $\lll I_{W_0}I_{C_r} R I_{C^c_r} I_{W_0} \lll_{v_0}\le    \lll    R  I_{W} \lll_v  \epsy_r $ for each $r$.

Bounds on the second term in \eqref{WoLimitA} are obtained similarly 
through a second truncation. Write, for any $n\ge 1$, 
\[
 [    I_{C_r^c} RI_{W^2_0}   ]v 
=  [    I_{C_r^c} RI_{W^2_0}  \ind_{C_n} ]v 
+ [    I_{C_r^c} RI_{W^2_0}  \ind_{C_n^c} ]v .
\]
Arguing as above we have $\lll    I_{C_r^c} RI_{W^2_0}  
\ind_{C_n^c} \lll_v \le   \lll    R  I_{W} \lll_v  \epsy_n$.  
Moreover, $W^2_0 v \le W v\le n^2$ on $C_n$, which gives,
\[
 [    I_{C_r^c} RI_{W^2_0}   ]v 
\le n^2     I_{C_r^c}   +    \lll    R  I_{W} \lll_v  \epsy_n v .
\]
Multiplying both sides of this equation by $W_0$ gives,
\[
 [  I_{W_0}  I_{C_r^c} RI_{W_0}   ]v_0
\le     n^2   I_{C_r^c}  W_0  +    \lll    R  I_{W} \lll_v  \epsy_n v_0,
\]
so that 
\begin{equation}
\lll  I_{W_0}  I_{C_r^c} RI_{W_0} \lll_{v_0}
\le 
 n^2   \lll I_{C_r^c}  W_0\lll_{v_0} +  \lll    R  I_{W} \lll_v  \epsy_n.
\label{WoRoW}
\end{equation} 
And also,
\[
\lll I_{C_r^c}  W_0\lll_{v_0} = \sup_{x\in C_r^c} \frac{W_0(x)}{v_0(x)} 
= \sup_{x\in C_r^c} \frac{1}{v(x)} \le \frac{1}{r} .
\] 
This combined with \eqref{CrCoerce} implies that \eqref{WoRoW}  can be made 
arbitrarily small by choosing large $n$ and then large $r$.
\end{proof}

\begin{lemma}
\label{t:barRbddW}
Under~{\em (A1)} and~{\em (A2)}, for each $r\ge 1$ and $\epsy>0$,  
there exists $t_0>0$ and $t_1<\infty$ in the 
definition \eqref{e:Rhat} such that,
\begin{equation*}
\lll I_{W_0}I_{C_r}  (R -  \barR ) I_{C_r}I_{W_0}\lll_{v_0} \le \epsy,
\end{equation*}
where $\barR=\barR_1$.
\end{lemma}

\begin{proof}
Since $W_0$ and $v_0$ are bounded on $C_r$,  we can apply the bound,
\[
  \lll I_{W_0}I_{C_r}  (R -  \barR ) I_{C_r}I_{W_0}\lll_{v_0}  \le \bigl( \sup_{x\in C_r}  W_0(x)^2 v_0(x)  \bigr) 
  \lll  I_{C_r}  (R -  \barR ) I_{C_r} \lll_1
\]
Hence
 it is sufficient to prove the result with $W_0=v_0=1$.  
 
We have by definition of $\barR$,
\[
  \lll  I_{C_r}  (R -  \barR ) I_{C_r} \lll_1  = \sup_{x\in C_r} \int_{t\in [t_0,t_1]^c} e^{-t} P^t(x,C_r)\, dt.
\]
The right hand side is bounded by $t_0 + e^{-t_1}$,  which can be made arbitrarily small by choice of $t_0>0$ and $t_1<\infty$.
\end{proof}

\Proposition{i-ii} will be seen as a corollary to the following more general
bound:

\begin{proposition}
\label{t:finiteRank}
For any $\epsy>0$ there exists 
a finite-rank kernel $T$ satisfying:
\begin{equation*}
\lll I_{W_0} [R - T] I_{W_0}\lll_{v_0} \le \epsy.
\end{equation*}
The kernel can be taken 
of the form,
\begin{equation}
 T=\sum_{ij}r_{ij}\IND_{C_i}\otimes\nu_j
\label{e:Tsimple}
\end{equation}
where $\{C_i:1\leq i\leq N\}$ is
a finite collection of disjoint, open, precompact sets,  $\{r_{ij}\}$ are non-negative constants, and
$\{\nu_i\}$ are probability measures on $(\state,\clB)$
with each $\nu_i$ supported on $C_i$.
\end{proposition}

\begin{proof}
\Lemma{RbddW} and
\Lemma{barRbddW} imply 
that for any $\epsy>0$ we can find $r_0\ge 1$ such that,
$$ 
	\lll I_{W_0} (R - I_{C_{r_0}} \barR I_{C_{r_0}} )  I_{W_0}\lll_{v_0} \le \epsy/2.
$$
With this value of $r_0$ fixed, note that (A2)~implies that 
for any $\epsy_0>0$  we can construct a kernel $T (x,dy) = t(x,y) dy$ 
of the form given in \eqref{e:K} such that $|t(x,y)-\barxi(x,y)|\le \epsy_0$ 
for $(x,y)\in C_{r_0}\times C_{r_0}$ (see definition of $\barxi$ above \Proposition{StrongFeller}).    In particular, the functions 
$\{s_i\}$ and the densities of the $\nu_i$ can be taken as indicator 
functions, so that this   is simply the approximation of the continuous 
function $\barxi(\varble,\varble)$ by simple functions. 
Consequently,
\[
\begin{aligned}
\lll I_{W_0} [R - T ] I_{W_0}\lll_{v_0}   
& 
\le \epsy/2 +  \lll I_{W_0} I_{C_{r_0}} [R - T ] I_{C_{r_0}} I_{W_0}\lll_{v_0}
\\
& 
\le \epsy/2 +     \epsy_0\sup_{x\in C_{r_0} } W_0(x) \sup_{x\in C_{r_0} } \bigl( v(x) W_0^2(x) \bigr) \muLeb(C_{r_0})   
\\
& 
\le  \epsy/2 +     \epsy_0  r_0^4 \muLeb(C_{r_0})   
,\end{aligned}
\]
where $\muLeb(C_{r_0})   $ denotes the Lebesgue measure of the 
bounded set $C_{r_0}$.
The right-hand-side is bounded by $\epsy$ on choosing
 $\epsy_0 =  [r_0^4 \muLeb(C_{r_0}) ]^{-1} (\epsy/2 ) $.
\end{proof}

\medskip

\noindent
{\em Proof of \Proposition{i-ii}. }
Since \Proposition{finiteRank} was proved
for an arbitrary function $W_0$ satisfying 
(\ref{Wo}), we can take $W_0$ equal to 
a constant, say $w\geq 1$. 
First consider the case $\kappa=1$. There,
applying \Proposition{finiteRank} with $\epsilon/2$
instead of $\epsilon$, we obtain a finite-rank
kernel of the form \eqref{e:Tsimple}.
Letting $\clE_0=\kappa[-I+T]$,
$$\lll\clD_\kappa-\clE_0\lll_v
=\lll \kappa[\kappa R_\kappa-T]\lll_v
=\lll R-T\lll_v
=\frac{1}{w}\lll I_{W_0}[R-T]I_{W_0}\lll_{v_0}
\leq \epsilon/2.
$$
Now we define $\clE=\clE_0+ \IND_{C_0}\otimes\nu_1$, with $C_0 =\state\setminus\cup_{1\leq i\leq N} C_i$ and $\nu_1$ a probability measure supported on $C_1$.
We have $\clE 1\equiv 0$ as required, and the following bound holds:
\[
\lll\clD_\kappa-\clE\lll_v \le \lll\clD_\kappa-\clE_0\lll_v + \nu_1(v)\Bigl(\sup_{x\in C_0} \frac{1}{v(x)}\Bigr).
\]
Recall that  $C_0^c  =\bigcup_{i\ge 1} C_i$.  If the $\{C_i: i\ge 1\}$ are constructed so that $C_0^c\subset C_v(r)$,  then  
the right hand side is bounded by $ \nu_1(v) r^{-1}$.  For $r>0$ sufficiently large, this is less than $\epsy$, as required.

For a fixed, general $\kappa$ we consider the scaled 
process $\{Z(t):=\Phi(t/\kappa)\;:\;t\geq 0\}$
and note it satisfies exactly the same assumptions 
as $\{\Phi(t)\}$. Also,  $\kappa R_\kappa$,
is the  resolvent
kernel for $\{Z(t)\}$ (corresponding to the
parameter $\alpha=1$)
so that, as before by \Proposition{finiteRank},
we obtain the required bound.
\qed


\bibliographystyle{plain}

\end{document}